\documentclass[12pt]{article}

\usepackage{latexsym,amssymb,amsmath,bm}

\pagestyle{headings}

\newcommand{\C}{\mathbb C}
\newcommand{\Z}{\mathbb Z}
\newcommand{\N}{\mathbb N}
\newcommand{\F}{\mathbb F}

\newcommand{\Q}{\mathbb Q}
\newcommand{\R}{\mathbb R}

\newcommand{\sma}{\left(\begin{array}}
\newcommand{\fma}{\end{array}\right)}

\newtheorem{lem}{Lemma}[section]
\newtheorem{defn}[lem]{Definition}

\newtheorem{co}[lem]{Corollary}
\newtheorem{thm}[lem]{Theorem}
\newtheorem{prop}[lem]{Proposition}

\newenvironment{proof}{\textbf{Proof.}}{\newline\hspace*{\fill}{$\Box$}\\}

\begin{document}
\title{Aspects of non positive curvature for linear groups with no
infinite order unipotents}

\author{J.\,O.\,Button\\
Selwyn College\\
University of Cambridge\\
Cambridge CB3 9DQ\\
U.K.\\
\texttt{j.o.button@dpmms.cam.ac.uk}}
\date{}
\maketitle

\begin{center}
2010 {\it Mathematics Subject Classification:}\\Primary 20F67; 
Secondary 20F65
\end{center}
\hfill\\
{\it Keywords}: linear, positive characteristic, CAT(0) space\\

\begin{abstract}
We show that a linear group without unipotent elements
of infinite order
possesses properties akin to those held by groups of non positive
curvature. Moreover in positive characteristic any finitely generated
linear group acts properly and semisimply on a CAT(0) space.
We present applications, including that the mapping
class group of a surface having genus at least 3
has no faithful linear representation which is complex
unitary or over any field of positive characteristic.
\end{abstract}

\section{Introduction}

Knowing that a group $G$ is non positively curved allows us to draw strong
conclusions about $G$, not just about its geometry but also its group 
theoretic structure. This is explained in the book \cite{bdhf} where two 
notions of what it means for a group to be non positively curved are examined
in detail. First is the class of CAT(0) groups, which are those groups
acting geometrically
(namely properly and cocompactly by isometries) on some CAT(0) 
metric space $(X,d)$, whereupon we can conclude:\\
\begin{thm} (\cite{bdhf} Part III Chapter $\Gamma$
Theorem 1.1 Part 1)\\ \label{know}
A CAT(0) group $\Gamma$ has the following properties:\\
(1) $\Gamma$ is finitely presented.\\
(2) $\Gamma$ has only finitely many conjugacy classes of finite
subgroups.\\
(3) Every solvable subgroup of $\Gamma$ is virtually abelian.\\
(4) Every abelian subgroup of $\Gamma$ is finitely generated.\\
(5) If $\Gamma$ is torsion-free, then it is the fundamental group of
a compact cell complex whose universal cover is contractible.
\end{thm}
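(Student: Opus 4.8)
The plan is to exploit three structural features of a CAT(0) space $(X,d)$ on which $\Gamma$ acts geometrically: $X$ is uniquely geodesic and contractible, its metric is convex, and every bounded subset has a unique circumcentre. The common starting point for all five parts is the Milnor--\v{S}varc lemma: because the action is proper and cocompact by isometries on the length space $X$, it gives at once that $\Gamma$ is finitely generated and quasi-isometric to $X$. I will also use repeatedly that, under a cocompact action, every element of $\Gamma$ acts as a semisimple isometry.

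For (1) I would upgrade finite generation to finite presentation using simple connectivity of $X$. Fixing a compact, connected set $K$ whose translates cover $X$, I take as generators those $s \in \Gamma$ with $sK \cap K \neq \emptyset$; since $X$ is simply connected, any relation loop in the associated translate graph bounds a disc that, subdivided finely using cocompactness, produces finitely many defining relations. For (2) the key tool is the Bruhat--Tits fixed point theorem: a finite subgroup $F \leq \Gamma$ has a bounded orbit and hence fixes its circumcentre. After conjugating so that this fixed point lies in a fixed compact fundamental domain, properness bounds the relevant stabilisers, and a compactness argument leaves only finitely many conjugacy classes of such $F$.

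For (4) I would take a subgroup $A \cong \Z^n$, note it acts by semisimple isometries, and apply the Flat Torus Theorem to obtain an invariant flat $E \cong \R^n$ on which $A$ acts by translations; properness forces $A$ to be a discrete group of translations, hence finitely generated, and a general abelian subgroup reduces to its finitely generated pieces, whose ranks are bounded by $\dim X$. For (5), torsion-freeness together with properness makes the action free, since point stabilisers are finite; then $X \to X/\Gamma$ is a covering with contractible total space, so $X/\Gamma$ is a compact aspherical space with fundamental group $\Gamma$, which one promotes to a compact cell complex using the cocompactness of the action.

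I expect part (3) to be the main obstacle, being the deepest of the five. The plan is the Solvable Subgroup Theorem, proved by induction on the derived length: the Flat Torus Theorem controls the top abelian quotient, and analysing the minimal set of $X$ together with the splitting along the invariant flat lets one push the virtually abelian conclusion down the derived series. Keeping track of how the normaliser acts on the flat, and ruling out parabolic behaviour (again via cocompactness), is the delicate bookkeeping that makes this step substantially harder than the rest.
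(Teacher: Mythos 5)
This statement is quoted in the paper verbatim from Bridson--Haefliger (Part III, Chapter $\Gamma$, Theorem 1.1) and no proof of it appears in the paper, so there is no in-paper argument to compare against. Your sketch is essentially a reconstruction of the standard proofs from that reference: Milnor--\v{S}varc together with simple connectivity of $X$ for (1), the Bruhat--Tits fixed point theorem plus properness for (2), the Flat Torus Theorem for (4), the Solvable Subgroup Theorem for (3), and freeness of the action on the contractible space $X$ for (5). In outline this is the right route and matches the cited source.

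The one step that would fail as written is the treatment of general abelian subgroups in (4). Knowing that every finitely generated subgroup of an abelian subgroup $A$ is (virtually) free abelian of rank bounded by the dimension of flats does not imply that $A$ is finitely generated: $\Z[1/2]$ and $\Q$ are ascending unions of infinite cyclic subgroups, and the Pr\"ufer groups are unions of finite cyclic ones. The missing ingredient is a discreteness statement coming from the geometric action: for a group acting properly and cocompactly on a CAT(0) space, the translation lengths of the infinite order elements are bounded away from $0$ (the set of translation lengths is discrete), and point stabilisers are finite. This rules out an infinite strictly ascending chain of free abelian subgroups of bounded rank and shows the torsion subgroup of $A$ is finite, after which the induction closes. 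A smaller gloss occurs in (5): the quotient $X/\Gamma$ is a compact aspherical space but need not carry a cell structure, so you should name the device (a Rips-type complex on $\Gamma$, or the nerve of a suitable cover) used to produce an actual compact cell complex with contractible universal cover.
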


Now the focus of this paper is on linear groups $G$ (here meaning
that $G$ embeds in the general linear group $GL(d,\F)$ for some
integer $d$ and some field $\F$ of arbitrary characteristic) which
are finitely generated. Such groups also have good properties, for instance
they are residually finite and satisfy the Tits alternative. However
in general there is no relationship between linearity and non positive
curvature. For instance the
Burger-Mozes groups in \cite{bm}
are CAT(0) and even have a geometric action on a
2 dimensional CAT(0) cube complex, but are infinite simple finitely
presented groups, so are as far from being linear as can be imagined. 
As for the other way round, it is not hard to see using well
known examples in small dimension that
none of the five properties above are true for every finitely generated linear
group: taking
various subgroups of $SL(2,\Z)\times SL(2,\Z)$ is enough to show
that (1), (2), (5) do not hold and the Heisenberg group of upper
unitriangular 3 by 3 matrices over $\Z$ fails (3), whereas 
wreath products
such as $\Z\,\wr\,\Z$ in $GL(2,\R)$, or in positive characteristic we could
take $C_p\,\wr\,\Z$ in $GL(2,\F_p(t))$, do not satisfy (4).

Another source of variation is that linear groups have good closure
properties, such as being preserved under taking subgroups and
commensurability. As for CAT(0) groups, even a finitely presented
subgroup of a CAT(0) group need not itself be CAT(0) (see \cite{bdhf}
Chapter III $\Gamma$ Section 5) and it is unknown whether being CAT(0)
is preserved under commensurability. However there is a more general
notion of non positive curvature: here we will say that a group $G$
is {\bf weak CAT(0)} if $G$ has an
isometric action on a complete CAT(0) space $X$
which is proper and semisimple
(meaning that for any $g\in G$ the displacement function $x\mapsto
d(x,g(x))$ attains its infimum over $X$).
For this notion of non positive curvature (which does indeed hold
if we have a CAT(0) group) we obtain:
\begin{thm} (\cite{bdhf} Part III Chapter $\Gamma$ Theorem 1.1 Part 2)\\
\label{know2}
If $H$ is a finitely generated group that acts properly (but not necessarily
cocompactly) by semisimple isometries on the CAT(0) space $X$, then:\\
(i) Every polycyclic subgroup of $H$ is virtually abelian.\\
(ii) All finitely generated abelian subgroups of $H$ are
undistorted in $H$.\\
(iii) $H$ does not contain subgroups of the form
$\langle a,t|t^{-1}a^pt=a^q\rangle$ 
for non zero $p,q$ with $|p|\neq |q|$.\\
(iv) If $A\cong\Z^n$ is central in $H$ then there exists a subgroup
of finite index in $H$ that contains $A$ as a direct factor.\\
\end{thm}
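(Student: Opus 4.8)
The plan is to treat all four statements as consequences of the basic structure theory of semisimple isometries of a complete CAT(0) space, with the Flat Torus Theorem as the main engine. Recall first the dichotomy: a semisimple isometry $g$ is either elliptic (it fixes a point of $X$) or hyperbolic (its displacement function attains a strictly positive infimum $|g|$, the translation length). Since the action of $H$ is proper, point stabilisers are finite, so every elliptic element has finite order; equivalently, every infinite order element of $H$ acts hyperbolically. I will use two elementary facts about translation lengths: they are conjugation invariant ($|hgh^{-1}|=|g|$), and for a hyperbolic isometry they are homogeneous ($|g^m|=|m|\,|g|$ for $m\in\Z$, via translation along an axis). The Flat Torus Theorem supplies the rest: if $A\cong\Z^n$ acts properly by semisimple isometries then $\mathrm{Min}(A)=\bigcap_{a}\mathrm{Min}(a)$ is a non-empty closed convex subspace that splits isometrically as $Y\times\E^n$, with $A$ acting trivially on $Y$ and as a cocompact translation lattice $\Lambda$ on $\E^n$, and the normaliser of $A$ preserves both $\mathrm{Min}(A)$ and this splitting.

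I would dispose of (iii) first, as it is the most direct. Suppose $H$ contained $G=\langle a,t\mid t^{-1}a^pt=a^q\rangle$ with $0<|p|\neq|q|$. In $G$ the element $a$ has infinite order, so it acts hyperbolically with $|a|>0$. The relation says $a^p$ and $a^q$ are conjugate in $H$, hence $|a^p|=|a^q|$; homogeneity then forces $|p|\,|a|=|q|\,|a|$, whence $|p|=|q|$, a contradiction. For (ii), fix a basepoint $x_0$ in the flat $\E^n$ inside $\mathrm{Min}(A)$. Because the flat is convex it is isometrically embedded, so for $a\in A$ the displacement $d(x_0,ax_0)$ equals the Euclidean norm of the lattice vector $\Lambda(a)$, which is bi-Lipschitz to the word length $|a|_A$. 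On the other hand each generator of $H$ moves $x_0$ a bounded amount, so $d(x_0,ax_0)\leq D\,|a|_H$ for a constant $D$. Combining, $|a|_A\preceq|a|_H$, and since trivially $|a|_H\leq|a|_A$, the inclusion $A\hookrightarrow H$ is a quasi-isometric embedding; that is, $A$ is undistorted. One could in fact read (iii) off from (ii), since $\langle a\rangle$ is exponentially distorted in $G$ when $|p|\neq|q|$.

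For (iv) I would exploit centrality to rigidify the action. As $A$ is central, $H$ normalises $A$ and preserves $\mathrm{Min}(A)=Y\times\E^n$ with its splitting, acting on the factor $\E^n$ by affine isometries $x\mapsto M_hx+b_h$ with $M_h$ orthogonal. Conjugating the translation by $v\in\Lambda$ by $h$ yields the translation by $M_hv$; centrality gives $M_hv=v$ for all $v$ in the full-rank lattice $\Lambda$, so $M_h=\mathrm{id}$ and every $h$ acts on $\E^n$ by a pure translation. This produces a homomorphism $\tau\colon H\to\R^n$ sending $A$ isomorphically onto $\Lambda\cong\Z^n$. The image $L=\tau(H)$ is finitely generated, hence free abelian, and contains the full-rank lattice $\Lambda$. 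Passing to the saturation $P=\{x\in L: kx\in\Lambda \text{ for some } k\geq 1\}$, which is a direct summand $L=P\oplus Q$ with $\Lambda$ of finite index in $P$, and replacing $H$ by the finite-index subgroup $H''=\tau^{-1}(\Lambda\oplus Q)$, we obtain a surjection $\pi\colon H''\to\Lambda$ (projection of $\tau$ to the $\Lambda$ summand) restricting to an isomorphism on $A$. Because $A$ is central with $\pi|_A$ an isomorphism onto $\pi(H'')$, the group $H''$ splits as $A\times\ker\pi$, exhibiting $A$ as a direct factor of a finite-index subgroup.

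Finally, statement (i) — the Solvable Subgroup Theorem restricted to polycyclic groups — is where I expect the real work to lie. The natural strategy is induction on the Hirsch length: given a polycyclic $S\leq H$, pick a normal subgroup $A\cong\Z^n$ and study the action of $S$ on $\mathrm{Min}(A)=Y\times\E^n$. The Flat Torus Theorem shows $S$ preserves the splitting and maps to $GL(n,\Z)$ via the linear parts on $\E^n$, with the translational behaviour and the induced action on $Y$ to be controlled inductively. The delicate point, and the main obstacle, is to rule out an infinite-order linear part: such monodromy would either push an eigenvalue off the unit circle, creating an exponentially distorted cyclic subgroup that contradicts the undistortedness of (ii), or produce a unipotent-type distortion that must likewise be excluded. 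Marshalling these cases into a clean induction, rather than the essentially formal deductions needed for (ii)--(iv), is the substantive part of the argument.
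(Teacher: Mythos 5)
First, a point of comparison: the paper does not prove this theorem at all --- it is quoted verbatim from Bridson--Haefliger (Part III, Chapter $\Gamma$, Theorem 1.1), so the only meaningful benchmark is that source. Your arguments for (ii), (iii) and (iv) are correct and are essentially the standard ones found there: (iii) follows from conjugacy-invariance and homogeneity of the translation length of a hyperbolic isometry, properness being what forces infinite-order elements to be hyperbolic rather than elliptic; (ii) follows because the orbit map of a basepoint in a maximal flat of $\mathrm{Min}(A)$ is a quasi-isometric embedding (you should add a word reducing a general finitely generated abelian subgroup to its finite-index free abelian part, since undistortion passes to and from finite-index subgroups); and (iv) works by producing the translation homomorphism $\tau\colon H\to\R^n$ injective on the central $A$ and then doing pure algebra --- this last reduction is precisely the paper's own Lemma \ref{lemqk}, with $\tau$ playing the role that the determinant homomorphism plays in Section 3.

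The genuine gap is part (i), which you explicitly leave open. The missing ingredient is not a distortion or eigenvalue argument but a compactness one. In the critical case $S=\Z^n\rtimes_\phi\Z$, with $A=\Z^n$ acting as the cocompact translation lattice $\Lambda$ on the $\E^n$ factor of $\mathrm{Min}(A)$, the stable letter $t$ preserves the splitting and acts on $\E^n$ as $x\mapsto M_tx+b_t$ with $M_t$ orthogonal; conjugating the translations of $A$ by $t$ shows $M_t\Lambda=\Lambda$, so $\phi$ is realised by an orthogonal matrix preserving a full-rank lattice. The subgroup of $O(n)$ preserving a lattice is finite (it is a discrete subgroup of a compact group, since it permutes the finitely many lattice vectors of any fixed bounded length that span $\R^n$), hence $\phi$ has finite order and $S$ is virtually $\Z^{n+1}$. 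Your proposed alternative --- pushing an eigenvalue off the unit circle or detecting unipotent distortion and invoking (ii) --- is both heavier and incomplete as stated: an infinite-order element of $GL(n,\Z)$ with all eigenvalues on the unit circle is quasi-unipotent by Kronecker's theorem, the resulting distortion of cyclic subgroups of $\Z^n$ is then only polynomial rather than exponential, and in any case you would still have to carry out the induction on Hirsch length, which requires extracting a normal free abelian subgroup of finite index from the merely virtually abelian output of the inductive hypothesis. Until that induction is actually executed, (i) remains unproven.
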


Moreover this class of weak CAT(0) groups is closed under taking
subgroups and commensurability. But once again none of the properties
(i) to (iv) hold for all finitely generated linear groups: for instance
the Heisenberg group above does not satisfy (i), (ii) or (iv)
whereas the Baumslag-Solitar group $BS(1,2)$ fails (ii) and (iii)
but embeds in $GL(2,\Q)$. 
However in writing out the obvious matrices that generate these
counterexamples, one is struck by how often unipotent matrices 
(where all eigenvalues equal 1) appear. This might lead us to ask: what if
we only consider linear groups with no unipotent elements (other than the
identity)? Do such groups share the properties (i) to (iv)
of non positive curvature in Theorem \ref{know2}? Indeed are
they even weak CAT(0) groups?
In this paper we will consider both zero and positive characteristic
linear groups. In fact over positive characteristic it is straightforward
to see that all unipotent elements have finite order, whereas our
counterexamples above used infinite order unipotents. In the next
section we prove Theorem \ref{pos}, which states that any finitely generated
linear group in positive characteristic is weak CAT(0) and so we conclude
by the above theorem that it does
possess all of the properties (i) to (iv).

In characteristic zero we also consider the class of linear
groups with no infinite order unipotent elements, which now is equivalent to
saying there are no non trivial unipotents (as in this case all other unipotent
matrices have infinite order).
This includes any real orthogonal or complex unitary group, as well as
much else besides, and we can show similar properties of non positive
curvature for these groups too. In particular
Section 3 is about splitting theorems for
centralisers and we establish property (iv) in Corollary \ref{coiv}
for finitely generated linear groups in characteristic zero with no
non trivial unipotents.
An immediate application of these results, using the ideas in \cite{bri} on
centralisers of Dehn twists in mapping class groups,
is then given in Corollary \ref{yoh}:
the mapping class group of an orientable  surface of genus at least 3
cannot be linear
over any field of positive characteristic, nor can it embed in the
complex unitary group of any finite dimension.
Indeed any linear representation in any dimension over any field
sends each Dehn twist to a matrix that either has finite order
or is virtually unipotent.

In Section 4 we examine the difference between
a finitely generated group $G$ having all of its infinite cyclic subgroups
undistorted in $G$ and the stronger property where this holds for
all finitely generated abelian subgroups. The relevance
here to linear groups is that it was shown in \cite{lmr} that if $G$ is a
finitely generated linear group with no infinite order unipotents
then any infinite cyclic
subgroup of $G$ is undistorted. This fact also holds under the
same hypotheses for finitely generated abelian subgroups of $G$, but
instead we examine how we might establish this stronger property
without using linearity directly. To this end, we adapt some ideas of
G.\,Connor on translation lengths to show by a quick argument
in Theorem \ref{trids} that if a finitely generated group
$G$ has ``uniformly non distorted'' infinite cyclic subgroups
then any finitely generated abelian subgroup is also undistorted in $G$.
As an application, consider the group $Out(F_n)$ (which is known not to be
a linear group when $n\geq 4$).
It was recently shown in \cite{wrig} that any abelian subgroup of
$Out(F_n)$ (which will be finitely generated) is undistorted. This extended the
paper \cite{alib} which established the same result for
infinite cyclic subgroups of $Out(F_n)$. However as this paper actually
obtains ``uniform non distortion'' for infinite cyclic subgroups,
we obtain as an immediate corollary of Theorem \ref{trids} a much shorter
proof of the result in \cite{wrig}.
We finish by noting that the property of undistorted abelian subgroups then
extends immediately to $Aut(F_n)$ and free by cyclic groups $F_n\rtimes\Z$. 

We would like to thank the various referees for their comments on the initial
submission of this paper, resulting here in a version which is both stronger 
and shorter than the original draft.

\section[linear groups in positive characteristic]{Finitely generated
linear groups in positive characteristic}

Let $k$ be any field in any characteristic and $d\in\N$ any positive integer.
An element $g$ of the general
linear group $GL(d,k)$ is said to be {\bf unipotent}
if all its eigenvalues (considered over the algebraic closure
$\overline{k}$ of $k$) are equal to 1, or equivalently 
some positive power of $g-I$ is the zero matrix. It is then an easy exercise
(for instance using Jordan decomposition)
to show that if $k$ has characteristic $p>0$ then any unipotent
element $M$ in $GL(d,k)$ has finite (indeed $p$ power) order,
whereas if $k$ has zero characteristic then the only finite
order unipotent element in $GL(d,k)$ is $I_d$.

Given an arbitrary field $k$ (again in any characteristic for now),
a {\bf discrete valuation} on $k$ is a function $\nu:k\rightarrow
\Z\cup\{\infty\}$ such that\\
(1) $\nu(x)=\infty$ if and only if $x=0$\\
(2) $\nu(xy)=\nu(x)+\nu(y)$\\
(3) $\nu(x+y)\geq \mbox{min}(\nu(x),\nu(y))$.\\
This gives rise to a non archimedean metric on $k$ via setting
$d(x,y)=e^{-\nu(x-y)}$.
The set of elements ${\cal O}_\nu=\{x\in k:\nu(x)\geq 0\}$ forms a
subring of $k$, called the valuation ring, which is a principal ideal
domain and an element $\pi$ with $\nu(\pi)=1$ is called a uniformiser.

Now suppose that $k$ is a field of characteristic $p>0$ which is
finitely generated over the prime subfield $\F_p$. This means that
there are finitely many algebraically independent transcendental
elements $t_1,\ldots ,t_d$ such that $k$ is a finite extension of the
field $\F_p(t_1,\ldots ,t_d)$. We now adapt Theorem 1 of \cite{eric}
to establish a similar result (in fact they worked in arbitrary
characteristic whereas here we are only in positive characteristic but
we obtain a stronger conclusion for this case).
\begin{prop} \label{vals}
If $k$ is any finitely generated field of positive characteristic and
$R$ is any finitely generated subring of $k$ then there exist
finitely many discrete valuations $\nu_1,\ldots ,\nu_b$ on $k$ such that
for any $m\in\Z$ the set
\[\{r\in R:\nu_i(r)\geq m\mbox{ for all } 1\leq i\leq b\}
\]
is finite.
\end {prop}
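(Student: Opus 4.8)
The plan is to build the required valuations geometrically from a projective model of $R$, and to exploit the fact that $\F_p$ is finite, which is the single place where positive characteristic is essential: over $\Q$ the analogous statement fails, since $\Z$ already contains infinitely many elements with bounded $p$-adic valuation. First I would reduce to the case $k=\mathrm{Frac}(R)$. As $R$ sits inside the field $k$ it is an integral domain; write $E=\mathrm{Frac}(R)\subseteq k$. The sets in the statement depend only on the restrictions $\nu_i|_R$, so it suffices to produce suitable discrete valuations on $E$ and then extend each to a discrete valuation on $k$. Any valuation on a subfield extends to a finitely generated extension (via a Gauss extension across a transcendence basis, then across the remaining finite algebraic part), and discreteness is preserved after rescaling the value group to $\Z$; since rescaling and extension do not alter which subsets of $R$ are bounded below, this is harmless. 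Thus I assume $k=E$ and $R=\F_p[r_1,\dots,r_n]$ is a finitely generated domain with $\mathrm{Frac}(R)=k$.

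Next I would construct the valuations. The generators give a closed embedding $\mathrm{Spec}(R)\hookrightarrow\mathbb{A}^n_{\F_p}$; let $X\subseteq\PP^n_{\F_p}$ be the projective closure and $\pi:\widetilde X\to X$ its normalisation. By finiteness of integral closure for finitely generated domains over a field, $\pi$ is finite and $\widetilde X$ is a normal projective variety over $\F_p$ with function field $k$. Let $U=\pi^{-1}(\mathrm{Spec}(R))$; this is an affine open of $\widetilde X$ whose coordinate ring $\widetilde R$ is the integral closure of $R$ in $k$, so in particular $R\subseteq\widetilde R$. The boundary $\widetilde X\setminus U$ is a proper closed subset; let $D_1,\dots,D_b$ be its finitely many codimension one components. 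Because $\widetilde X$ is normal, each $D_i$ is a prime divisor whose local ring is a DVR, yielding a discrete valuation $\nu_i=\mathrm{ord}_{D_i}$ on $k$. These are the candidate valuations.

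Finally I would prove finiteness. Fix $m=-M\le 0$ and take $f\in\widetilde R$ with $\nu_i(f)\ge -M$ for all $i$. Since $f$ is regular on $U$, its polar divisor is supported on $\widetilde X\setminus U$; any pole lies along a prime divisor contained in the boundary, hence along some $D_i$, with pole order at most $M$ (normality ensures there are no extra poles in codimension $\ge 2$). Therefore $f\in H^0(\widetilde X,\mathcal{O}(MD))$ where $D=\sum_i D_i$. This space of global sections of a line bundle on a projective variety over $\F_p$ is finite dimensional over $\F_p$, and as $\F_p$ is finite it is a finite set. The set in the statement satisfies $\{r\in R:\nu_i(r)\ge m\ \forall i\}\subseteq\{f\in\widetilde R:\nu_i(f)\ge m\ \forall i\}\subseteq H^0(\widetilde X,\mathcal{O}(MD))$, using $R\subseteq\widetilde R$, so it is finite, as required.

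The main obstacle I anticipate is technical rather than conceptual: guaranteeing that the $\nu_i$ are genuine discrete valuations and that the bounding space is finite. This is precisely why I pass to the normalisation, so that codimension one local rings are DVRs, and to a projective model, so that $H^0$ is finite dimensional; the finiteness of $\F_p$ then upgrades ``finite dimensional'' to ``finite''. A secondary point needing care is the reduction to $k=\mathrm{Frac}(R)$, namely verifying that valuations extend from $E$ to $k$ while remaining discrete, so that the constructed valuations genuinely live on the original field $k$.
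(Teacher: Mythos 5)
Your argument is correct, but it takes a genuinely different route from the paper's. The paper argues by induction on transcendence degree: writing $R$ inside $S[t,1/p_1,\ldots,1/p_u]$ for a finitely generated $S$ in a smaller field, it extends the inductively given valuations by the Gauss (minimum-of-coefficients) extension and adjoins the valuation $-\deg_t$ together with the $p_i$-adic valuations; boundedness below then forces bounded denominators, bounded degree and finitely many choices of coefficient, with the finite algebraic extension step handled by citing \cite{eric} Lemma 3 and the base case being the finiteness of $\F_p$. Your proof instead packages all of this into one geometric step: the boundary divisors of a normal projective model supply the valuations, and the finiteness of $H^0$ of the divisorial sheaf $\mathcal{O}_{\widetilde X}(MD)$ over the finite field $\F_p$ replaces the inductive count. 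This buys uniformity (no separate treatment of the finite extension, and no iteration over a transcendence basis) and makes completely transparent both why positive characteristic is essential and what the paper's ad hoc valuations ``really are'' (the paper's $\mu_0$, $\mu_i$ and Gauss extensions are precisely orders of vanishing along boundary divisors of an iterated $\PP^1$-bundle model); the cost is invoking finiteness of normalisation and coherence/properness machinery where the paper needs only elementary manipulations with polynomials. One small imprecision: $MD$ need not be Cartier on a normal variety, so $\mathcal{O}(MD)$ is in general only a rank-one reflexive (divisorial) sheaf rather than a line bundle; since it is still coherent and $\widetilde X$ is proper over $\F_p$, its space of global sections remains a finite-dimensional $\F_p$-vector space and your conclusion is unaffected. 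Your reduction to $k=\mathrm{Frac}(R)$ via extension and rescaling of valuations is also sound, since rescaling merely reindexes the bound $m$, which is quantified over all integers.
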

\begin{proof}
First suppose this is true for the field $k=\F_p(t_1,\ldots ,t_d)$ and let
us add a new algebraically independent transcendental $t$ to obtain the
field $k(t)$ which contains the unique factorisation domain $k[t]$.
Finite generation of $R$ means that there is a finitely generated
subring $S\subseteq k$ and finitely many monic irreducible polynomials
$p_1,\ldots ,p_u$ (which are prime in $k[t]$) such that $R$ is
contained in the finitely generated ring $S[t,1/p_1,\ldots ,1/p_u]$.
We now proceed by induction on the transcendence degree $d$, so we take
the valuations $\nu_1,\ldots ,\nu_b$ on $k$ satisfying the required
condition for $S$ and we extend each of these to $\nu_1',\ldots ,\nu_b'$
on $k[t]$ and then to the field of fractions $k(t)$ by setting
\[\nu_i'(a_0+a_1t+\ldots +a_nt^n)=
\mbox{min}\left( \nu_i(a_0),\nu_i(a_1),\ldots ,\nu_i(a_n)\right).\]
We also add valuations $\mu_0,\mu_1\ldots ,\mu_u$ given by
$\mu_0(a_0+a_1t+\ldots +a_nt^n)=-n$ and for $1\leq i\leq u$ we have
\[\mu_i(p_i^n\frac{a}{b})=n\in\Z\]
for the primes $p_1,\ldots ,p_u$ above.     

On taking these $b+u+1$ valuations on $k(t)$, suppose we are given
$m\in\Z$. Any $r\in R$ is of the form
\[r=\frac{a}{p_1^{i_1}\ldots p_u^{i_u}}\qquad\mbox{for }a\in S[t]
\mbox{ and }i_1,\ldots ,i_u\geq 0.
\]
If $\nu(r)\geq m$ for every valuation above then we immediately have
$i_1,\ldots ,i_u\leq -m$, giving only finitely many possibilities for
the denominator. Thus we can now consider $r=a$ which is a polynomial
$s_0+s_1t+\ldots +s_nt^n$, but $\mu_0(a)\geq m$ so the degree is also
bounded above by $-m$.

Now our inductive hypothesis is that
\[\{s\in S:\nu_i(s)\geq m\mbox{ for all }1\leq i\leq b\}\]
is finite, and we require polynomials $f(t)=s_0+s_1t+\ldots +s_nt^n$ with
coefficients in $S$ such that $\nu_i'(f)\geq m$ for each $1\leq i\leq b$.
This means that for each coefficient $s_j$, all of $\nu_1(s_j),\ldots ,
\nu_b(s_j)$ are at least $m$ and consequently by our inductive hypothesis
we have only finitely many choices for each of $s_0,\ldots ,s_n$. Thus
overall we have only finitely many choices for $r\in R$.

This concludes the proof for purely transcendental extensions of $\F_p$.
As for finite extensions of these, the argument in \cite{eric} Lemma 3
now goes through verbatim.
\end{proof}

Given any infinite field $k$ with discrete valuation $\nu$, valuation
ring ${\cal O}_\nu$ and a uniformiser $\pi$, we have an action 
of $SL(n,k)$ on its
Bruhat - Tits building, here denoted ${\cal B}_\nu$. We will need the
following facts (see for instance \cite{abbr} Section 6.9):\\
A lattice $L$ in $k^n$ is a free ${\cal O}_\nu$ submodule of $k^n$ that
contains a basis of $k^n$ and a lattice class $[L]$ is the orbit $k^*L$
under the obvious multiplication action. The Bruhat - Tits building
${\cal B}_\nu$ is a contractible $n-1$ dimensional simplicial complex
where the $j$-simplices are subsets of lattice classes
\[\{[L_0],\ldots ,[L_j]\}\mbox{ where }\pi L_j\subset L_0\subset L_1
\subset \ldots \subset L_j.\]
As $SL(n,k)$ acts on the set of lattices and hence also on the set of lattice
classes, it admits a simplicial action on ${\cal B}_\nu$ where the stabiliser
of a vertex is conjugate (in $GL(n,k)$ but not necessarily in $SL(n,k)$) to
$SL(n,{\cal O}_\nu)$. Moreover $SL(n,k)$ acts on ${\cal B}_\nu$ ``without
permutations'', which is to say that given $g\in SL(n,k)$ and a simplex 
$\sigma$ with $g(\sigma)=\sigma$ then $g$ fixes the vertices of $\sigma$
pointwise.

As ${\cal B}_\nu$ has the structure of a Euclidean building, it can be turned
into a metric space by putting the correct Euclidean metric on the top
dimensional simplices (the chambers), whereupon it becomes a complete
CAT(0) space and the simplicial action of $SL(n,k)$ is by isometries.
However the action is not proper (in the sense of a discrete group)
because the stabiliser of a vertex is isomorphic to
the infinite group $SL(n,{\cal O}_\nu)$. In order to  further
examine the stabilisers
of vertices, we use the following basic lemma.
\begin{lem} \label{bas}
Given any vertex $v\in {\cal B}_\nu$, there exists $m\in\Z$ such that for
all elements $g\in SL(n,k)$ fixing $v$ we have $\nu(g_{ij})\geq m$ for every
entry $g_{ij}$ of $g$.
\end{lem}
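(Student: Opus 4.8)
The plan is to recall that a vertex $v \in \mathcal{B}_\nu$ is a lattice class $[L]$, and that the stabiliser of $v$ in $SL(n,k)$ is exactly the set of $g$ with $g(L) = L$ as a lattice, i.e. the conjugate of $SL(n,\mathcal{O}_\nu)$ by a matrix $A$ sending the standard lattice $\mathcal{O}_\nu^n$ to $L$. So first I would fix such an $A \in GL(n,k)$ with $L = A \cdot \mathcal{O}_\nu^n$; then any $g$ fixing $v = [L]$ satisfies $A^{-1} g A \in SL(n,\mathcal{O}_\nu)$, meaning the matrix $h := A^{-1} g A$ has all entries in $\mathcal{O}_\nu$, equivalently $\nu(h_{ij}) \geq 0$ for all $i,j$.

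The core of the argument is then to convert the uniform bound $\nu(h_{ij}) \geq 0$ back into a uniform lower bound on $\nu(g_{ij})$, using $g = A h A^{-1}$. Writing out the matrix product, each entry $g_{ij}$ is a sum $\sum_{r,s} A_{ir} h_{rs} (A^{-1})_{sj}$. Applying properties (2) and (3) of the valuation, I get
\[
\nu(g_{ij}) \geq \min_{r,s}\left( \nu(A_{ir}) + \nu(h_{rs}) + \nu((A^{-1})_{sj}) \right) \geq \min_{r,s}\left( \nu(A_{ir}) + \nu((A^{-1})_{sj}) \right),
\]
where the last inequality uses $\nu(h_{rs}) \geq 0$. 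Crucially, the entries of $A$ and $A^{-1}$ are fixed once $v$ is fixed, so setting
\[
m = \min_{i,r}\nu(A_{ir}) + \min_{s,j}\nu((A^{-1})_{sj})
\]
gives a single integer depending only on $v$ (through $A$) such that $\nu(g_{ij}) \geq m$ for every entry of every $g$ in the stabiliser. This $m$ is finite since each individual entry of the fixed matrices $A$ and $A^{-1}$ lies in $k$ and so has finite valuation (those that are zero can simply be omitted from the minimum, or the minimum taken over nonzero entries).

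The one point requiring care, which I expect to be the main obstacle, is justifying that the stabiliser is genuinely a single $GL(n,k)$-conjugate of $SL(n,\mathcal{O}_\nu)$ rather than something larger: a priori $g$ fixing the class $[L]$ only forces $g(L) = \lambda L$ for some $\lambda \in k^*$, not $g(L) = L$. However, taking determinants, $\det(g) = 1$ forces $\lambda^n \nu(\pi)$-type relations; more precisely, if $g(L) = \lambda L$ then comparing the covolumes (equivalently $\nu(\det g) = \nu(\lambda^n)$) and using $\det g = 1$ shows $\nu(\lambda) = 0$, so $\lambda \in \mathcal{O}_\nu^*$ is a unit and $g(L) = L$ exactly. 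This is precisely why we restrict to $SL(n,k)$, and it is the fact quoted in the excerpt that the vertex stabiliser is conjugate to $SL(n,\mathcal{O}_\nu)$. Once this is in hand the estimate above completes the proof, and I note the bound is uniform over the whole (infinite) stabiliser, which is exactly what is needed downstream to analyse properness after passing to a finite product of buildings.
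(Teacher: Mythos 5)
Your proof is correct and follows essentially the same route as the paper's: both conjugate the stabiliser of $v$ into $SL(n,\mathcal{O}_\nu)$ by a fixed matrix depending only on $v$, and then use axioms (2) and (3) of the valuation to bound the entries of a product of matrices, yielding a uniform $m$ determined by the entries of the conjugating matrix and its inverse. Your extra verification that the $SL(n,k)$-stabiliser of a lattice class genuinely stabilises a lattice (via $\nu(\det g)=0$ forcing $\nu(\lambda)=0$) is a correct elaboration of the fact the paper simply quotes from its reference on buildings.
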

\begin{proof}
By the fact about stabilisers above, there exists $h\in GL(n,k)$ such that
$hgh^{-1}\in SL(n,{\cal O}_\nu)$. But if we have two matrices $x,y\in GL(n,k)$
and $m,n\in\Z$ such that all entries $x_{ij}$ of $x$ have $\nu(x_{ij})\geq m$ 
and similarly $\nu(y_{ij})\geq n$ then the valuation of any entry of the
matrix product $xy$ is at least $m+n$ by axioms (2) and (3). As each entry
$s_{ij}$ in an element $s$ of $SL(n,{\cal O}_\nu)$ has $\nu(s_{ij})\geq 0$
by definition and $g=h^{-1}(hgh^{-1})h$, we apply this fact twice.
\end{proof}

We can now give our main result.
\begin{thm} \label{pos}
If $G$ is any finitely generated linear group over a field
of positive characteristic then $G$ acts properly and semisimply by
isometries on a complete CAT(0) space.
\end{thm}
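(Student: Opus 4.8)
The plan is to embed $G$ in $SL(n,k)$ for a suitable finitely generated field $k$ of characteristic $p$, to apply Proposition \ref{vals} to obtain finitely many discrete valuations, and then to let $G$ act diagonally on the product of the associated Bruhat--Tits buildings, verifying that this action is proper, semisimple and by isometries on a complete CAT(0) space. First I would arrange the hypotheses of Proposition \ref{vals}. Since $G$ is finitely generated and linear in characteristic $p$, all entries of a finite generating set lie in a finitely generated subfield $k$ of characteristic $p$, and the embedding $GL(d,k)\hookrightarrow SL(d+1,k)$ sending $g\mapsto\mathrm{diag}(g,\det(g)^{-1})$ lets me assume $G\leq SL(n,k)$ with $n=d+1$. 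Letting $R$ be the subring of $k$ generated by the entries of the finite generating set together with those of their inverses, $R$ is a finitely generated subring of $k$ and every element of $G$ has all entries in $R$, so $G\leq SL(n,R)$. Proposition \ref{vals} applied to $R$ now yields discrete valuations $\nu_1,\ldots,\nu_b$ on $k$ such that for each $m\in\Z$ only finitely many $r\in R$ satisfy $\nu_i(r)\geq m$ for all $i$.

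Next I would construct the space $X={\cal B}_{\nu_1}\times\cdots\times{\cal B}_{\nu_b}$ of the corresponding Bruhat--Tits buildings, equipped with the $\ell^2$ product metric. Each ${\cal B}_{\nu_i}$ is a complete CAT(0) space carrying a simplicial isometric action of $SL(n,k)$ without permutations, and a finite product of complete CAT(0) spaces is again complete CAT(0), so $G$ acts on $X$ by isometries via the diagonal. For semisimplicity, each ${\cal B}_{\nu_i}$ is a Euclidean building and hence has only finitely many isometry types of simplices; by Bridson's theorem that every cellular isometry of a CAT(0) complex with finitely many isometry types of cells is semisimple, each $g$ acts semisimply on every factor. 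As the squared displacement function of $g$ on $X$ is the sum of the squared displacement functions on the factors, each of which attains its infimum, the displacement on $X$ attains its infimum as well, so $G$ acts by semisimple isometries on $X$.

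The crux is properness, and this is where Proposition \ref{vals} does the real work. Fixing a base point $x_0=(v_1,\ldots,v_b)$ and a radius $r>0$, I must show that $\{g\in G: d(gx_0,x_0)\leq r\}$ is finite. If $d(gx_0,x_0)\leq r$ then $d_i(gv_i,v_i)\leq r$ for each $i$, and I would turn this into a uniform lower bound $\nu_i(g_{jk})\geq m_i$ on every matrix entry, with $m_i$ depending only on $r$ and $v_i$. This is the bounded-displacement analogue of Lemma \ref{bas}: lying within combinatorial distance $N_i$ of $v_i$ (with $N_i$ controlled by $r$) forces lattice containments $\pi^{N_i}L_i\subseteq\pi^c gL_i\subseteq L_i$ with $|c|$ bounded using $\det g=1$, which bounds $\nu_i$ of the entries below. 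Setting $m=\min_i m_i$, each entry $g_{jk}$ then lies in the set $\{r\in R:\nu_i(r)\geq m \mbox{ for all } i\}$, which is finite by Proposition \ref{vals}; hence only finitely many $g$ are possible and the action is proper.

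The step I expect to be the main obstacle is precisely this passage from bounded metric displacement in the buildings to a uniform valuation bound on the matrix entries. A single building is not locally finite, since the residue field of $k$ may be infinite, so no individual ${\cal B}_{\nu_i}$ can yield a proper action; it is only the simultaneous bound across all $\nu_i$, fed into the finiteness in Proposition \ref{vals}, that forces discreteness. Some care is therefore needed to extend Lemma \ref{bas} from the displacement-zero (stabiliser) case to the case of bounded displacement while keeping the constants independent of $g$.
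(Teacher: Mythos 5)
Your overall architecture coincides with the paper's: the same reduction to $G\leq SL(n,R)\leq SL(n,k)$ with $R$ finitely generated, the same appeal to Proposition \ref{vals}, the same diagonal action on the product of Bruhat--Tits buildings, and the same semisimplicity argument via finitely many shapes (\cite{bdhf} II.6.6(2)) plus the product reduction (your direct sum-of-squared-displacements computation is just an unpacking of \cite{bdhf} II.6.9, which the paper cites). The one place you genuinely diverge is properness, and it is worth comparing the two routes. You aim for the stronger statement that $\{g: d(gx_0,x_0)\leq r\}$ is finite for every $r$, which requires a quantitative strengthening of Lemma \ref{bas}: a bound on displacement must be converted into a lower bound on the valuations of all matrix entries, uniformly in $g$. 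Your sketch of this (combinatorial distance $\leq N_i$ gives $\pi^{N_i}L_i\subseteq\pi^c gL_i\subseteq L_i$, with $0\leq c\leq N_i$ forced by $\det g=1$, hence $\nu_i(g_{jk})\geq -N_i$ up to the conjugation constant of Lemma \ref{bas}) is correct and completable via elementary divisors, but it is exactly the step you flag as delicate, and you leave the constants unchecked. The paper sidesteps this entirely with a softer argument: it only proves properness in the form needed for Theorem \ref{know2}, namely that each point $(x_1,\ldots,x_b)$ has an $\epsilon$-ball met by only finitely many translates. Since the action is simplicial and ``without permutations'', any $g$ moving $x_i$ by less than a suitable $\epsilon_i$ must preserve the open simplex containing $x_i$ and hence fix its vertices, so all such $g$ lie in a single vertex stabiliser $T_{\bf v}$, which is finite by the unquantified Lemma \ref{bas} together with Proposition \ref{vals}. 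So the paper's route buys a shorter proof with no metric estimates, while yours, if the displacement-to-valuation lemma is written out, buys the stronger conclusion of metric properness (finiteness of all bounded orbit sets), which is not needed for the theorem as stated. Neither route can work with a single valuation, as you correctly observe, since the buildings are not locally finite in general.
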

\begin{proof} We are given $G\leq GL(n.k)$ where $k$ is an arbitrary
infinite field of positive characteristic but we can increase $n$ by 1 so that
we can assume $G$ is actually a subgroup of $SL(n,k)$. Now finite
generation of $G$ means that the ring $R$ generated by the entries of $G$
is also finitely generated. We then replace $k$ by the field of fractions
of $R$, henceforth also called $k$, thus now $k$ is finitely generated as a
field but is still infinite and of positive characteristic. This means
that $G\leq SL(n,R)\leq SL(n,k)$ and the hypothesis of Proposition
\ref{vals} is satisfied. Consequently we obtain discrete valuations
$\nu_1,\ldots ,\nu_b$ on $k$ with the aforementioned property.

We now show that $SL(n,R)$ acts properly and semisimply by isometries on an
appropriate complete CAT(0) space $X$, thus $G$ does too as this property
passes to arbitrary subgroups. The space $X$ will be the product of the
Bruhat - Tits buildings ${\cal B}_{\nu_1}\times \ldots \times {\cal B}_{\nu_b}$
for each valuation on $k$ that we took. As each of these factors is complete
and CAT(0), our space $X$ will be too on being given the Euclidean product
metric. Moreover $SL(n,k)$ acts on each factor by isometries so it also does so
on $X$ via the diagonal action.

We now need to establish the two properties of this isometric action.
Starting with properness, first consider the stabiliser $T_{\bf v}$ not in
$SL(n,k)$ but rather in $SL(n,R)$ of a product of vertices 
\[{\bf v}=(v_1,\ldots ,v_b)\in {\cal B}_{\nu_1}\times \ldots \times 
{\cal B}_{\nu_b}.\]
Thus if $g$ is in $T_{\bf v}$ so that $g(v_1)=v_1,\ldots ,g(v_b)=v_b$ then
by Lemma \ref{bas} we have integers $m_1,\ldots ,m_b$ with every entry $g_{ij}$
of $g$ having $\nu_1(g_{ij})\geq m_1,\ldots ,\nu_b(g_{ij})\geq m_b$. Hence
for $m=\mbox{min}(m_1,\ldots ,m_b)$ we see that every entry $g_{ij}$ satisfies
$\nu_1(g_{ij})\geq m,\ldots ,\nu_b(g_{ij})\geq m$. This means
that by Proposition \ref{vals}
there are only finitely many possibilities for the entries of $g$ and hence
the stabiliser $T_{\bf v}$ is a finite subgroup.

Now consider the stabiliser $H$ in $SL(n,R)$
of a general point $(x_1,\ldots ,x_b)$. Take any
$h\in H$ with $h(x_1)=x_1, \ldots ,h(x_b)=x_b$ and first consider the action
of $SL(n,R)$ on the Bruhat - Tits building ${\cal B}_{\nu_1}$. Let $d$ be
the dimension such that $x_1$ lies in the $d$-skeleton of ${\cal B}_{\nu_1}$
but not the $(d-1)$-skeleton. Then the $d$-simplex $\sigma_d$ in which $x_1$
lies must be sent to itself by $h$ and so, as $SL(n,R)$ acts without
permutations, we have that $h$ fixes the vertices of $\sigma_d$ pointwise.
Applying this also to $x_2,\ldots ,x_b$, we see that $h$ also fixes a vertex
in every building ${\cal B}_{\nu_i}$ thus $h\in T_{\bf v}$ for some $\bf v$.

In order to obtain properness of the action, we take any point
$(x_1,\ldots ,x_b)\in X$ and consider the simplex $\sigma_d$ for $x_1$ as
above. Then there exists $\epsilon_1>0$ such that the ball $B(x_1,\epsilon_1)$
in ${\cal B}_{\nu_1}$ intersects the $d$-skeleton of ${\cal B}_{\nu_1}$ only
in the interior of the simplex $\sigma_d$. Thus as any element $g\in
SL(n,R)$ sends $\sigma_d$ to some $d$-simplex, if $g$ moves $x_1$ by less 
than $\epsilon_1$ then we have that $g(\sigma_d)=\sigma_d$ and so $g$ fixes
a vertex as above. In particular we obtain $\epsilon_1,\ldots ,\epsilon_b$ and
$\epsilon=\mbox{min}(\epsilon_1,\ldots ,\epsilon_b)$ such that if $g$ moves the
point $(x_1,\ldots ,x_b)$ by less than $\epsilon$ in the product metric on
$X$ then $g$ moves each $x_i$ by less than $\epsilon_i$. Consequently $g$ fixes
a vertex in every building and so lies in the finite subgroup $T_{\bf v}$
for some ${\bf v}$.

Finally to show the action is semisimple, we first quote \cite{bdhf} Part II
6.6 (2) which states that an action by simplicial isometries of a group
on a complete non positively curved simplicial complex having a finite set
of shapes is a semisimple action.
Next we use Proposition 6.9 in the same
part of the same volume, where it is shown that an isometric action
on a product of CAT(0) spaces is semisimple if the
action on each component is too.
\end{proof}   

Note that in general this CAT(0) space is not a proper metric
space because the Bruhat - Tits building associated to the 
valuation $\nu$ of the field $k$
is only locally finite if the residue field of $\nu$ is finite. This
is the case for (finite extensions of) $k=\F_p(t)$ but fails as soon
as we have more than one independent transcendental. 

We finish this section with a few words on related results and on
trying to extend this to the characteristic zero case. Of course there
are fields of characteristic zero possessing discrete valuations, such as
the $p$-adic valuations on $\Q$. In \cite{alsh} a similar result to
our Proposition \ref{vals} in characteristic zero is obtained (though
the resulting sets consist of algebraic integers and need not be
finite) and then the diagonal action on the
resulting product of Bruhat - Tits buildings is utilised to establish
exactly which finitely generated linear groups in characteristic zero
have finite virtual cohomological dimension. Equivalent cohomological
results for linear groups in positive characteristic occur in \cite{cokr} 
which proceeds along the lines of Theorem \ref{pos}, though the stabiliser
of a product of vertices under this action is shown to be locally finite
(which is not strong enough here as finitely generated linear groups in
positive characteristic need not be virtually torsion free).

In order to obtain proper actions of finitely generated linear
groups in characteristic zero, one could also throw in some (non discrete)
archimedean
absolute values (namely embeddings in $\R$ and $\C$) along with
the action on the appropriate symmetric space which will take the place of the
Bruhat - Tits
building, though some care is needed with the archimedean embeddings
if the resulting finitely generated field contains transcendental elements.
However the analogous result in characteristic zero would require that the
action is semisimple whenever our finitely generated linear group has
no non trivial unipotent elements. But by \cite{bdhf} Section II
Proposition 10.61 the elements of $SL(n,\R)$ that act
semisimply on its (real) symmetric space are exactly 
those matrices which are diagonalisable over $\C$, which is certainly
a more stringent restriction even for subgroups of $SL(n,\Z)$.

\section[Centralisers in linear groups]{Abelianisation of centralisers in 
linear groups with no infinite order unipotents}
For groups $H$ which possess some form of non positive curvature,
there are various splitting results in the literature for
centralisers, which generally take the form that if an infinite order
element $h$ (or even a finitely generated free abelian group) is
central in $H$ which is finitely generated then, up to replacing $H$ with a 
finite index subgroup
also containing $h$, we have that $H$ splits as a direct product 
$S\times\langle h\rangle$. This is achieved by considering the
translation part of an element commuting with $h$ when restricted
to an invariant axis of $h$. For our class of finitely generated linear groups
with no infinite order unipotents, we have already shown in Theorem \ref{pos}
that this property holds in positive characteristic and now we do the same in
characteristic zero. We first reduce this to:
\begin{lem} \label{lemqk}
Suppose that $H$ is a finitely generated group and 
$A\cong\Z^n$ is central in $H$. If we have a homomorphism
 $\theta$ from $H$ to some abelian group $C$ which is injective on $A$
then there exists a subgroup
of finite index in $H$ that contains $A$ as a direct factor.
\end{lem}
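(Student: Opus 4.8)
The plan is to reduce everything to the construction of a retraction. Specifically, I would aim to produce a finite index subgroup $K\leq H$ with $A\subseteq K$ together with a homomorphism $\phi\colon K\to A$ whose restriction to $A$ is the identity. Granted this, the conclusion is immediate: since $A$ is central in $H$ it is normal in $K$, and $\ker\phi$ is normal as a kernel. For any $k\in K$ we have $k=(k\,\phi(k)^{-1})\phi(k)$ with $k\,\phi(k)^{-1}\in\ker\phi$, so $K=\ker\phi\cdot A$; and $\ker\phi\cap A=\{1\}$ because $\phi$ is the identity on $A$. As $A$ is central the two factors commute, so $K=\ker\phi\times A$, exhibiting $A$ as a direct factor.

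To build such a $\phi$ I would exploit $\theta$ and the fact that its target is abelian. Writing $D=\theta(H)$, which is finitely generated abelian, and using that $\theta$ is injective on $A$, I identify $A$ with $\theta(A)\cong\Z^n$ as a rank $n$ free subgroup of $D$. The key reduction is then: \emph{it suffices to find a finite index subgroup $B\leq D$ with $\theta(A)\subseteq B$ and a homomorphism $p\colon B\to\theta(A)$ restricting to the identity on $\theta(A)$.} Indeed, setting $K=\theta^{-1}(B)$ gives a subgroup of $H$ of index $[D:B]$ (by the correspondence theorem for the surjection $H\to D$), containing $A$ since $\theta(A)\subseteq B$; and $\phi=(\theta|_A)^{-1}\circ p\circ(\theta|_K)$ is a homomorphism $K\to A$ which is the identity on $A$, as required.

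The crux is therefore the purely algebraic step of showing that a full rank free subgroup of a finitely generated abelian group becomes a direct factor of a suitable finite index subgroup. I would handle this by first killing the (finite) torsion subgroup $T$ of $D$, working in $D/T\cong\Z^m$; here the image $\bar A$ of $\theta(A)$ is still free of rank $n$, since $\theta(A)\cap T=\{1\}$. Taking the saturation $\widehat{\bar A}=\{v:kv\in\bar A\text{ for some }k\geq1\}$ gives a pure, hence direct, summand $\widehat{\bar A}\cong\Z^n$ with some complement $W$, and $\bar A$ has finite index in $\widehat{\bar A}$; thus $\bar A\oplus W$ has finite index in $\Z^m$ and retracts onto $\bar A$. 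Pulling this back along $D\to D/T$ produces the desired $B$ (of finite index, containing $T$ and $\theta(A)$) and the retraction $p$. This structure-theoretic step is where the real content lies, though it is routine once phrased via saturation; the only point demanding care is ensuring the index stays finite and that $\theta(A)$, not merely its saturation, is the factor being retracted onto, after which centrality of $A$ finishes the argument.
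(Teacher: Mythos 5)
Your proof is correct and follows essentially the same route as the paper: both reduce to the structure theory of finitely generated abelian groups to produce a homomorphism to $\Z^n$ (equivalently, a retraction onto $A$ defined on a finite index subgroup) that is injective on $A$, and then split off $A$ against the kernel using centrality. The only difference is presentational — you spell out the saturation/direct-summand step that the paper asserts in one line ("we can compose $\theta$ with a homomorphism $\phi$ from $C$ onto $\Z^n$ in which $A$ still injects").
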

\begin{proof} By dropping to
the image $\theta(H)$, we can assume that $\theta$ is onto
and so without loss of generality
$C$ is also finitely generated. By the classification
of finitely generated abelian groups, we have that $C=\Z^m\oplus$Torsion
for $m\geq n$ and we can compose $\theta$ with a homomorphism $\phi$ from
$C$ onto $\Z^n$ in which $A$ still injects, so the image $B=\phi\theta(A)$
will have finite index.

Thus if we set $K=\mbox{Ker}(\phi\theta)$ then the pullback 
$(\phi\theta)^{-1}(B)=KA$ has finite index in $H$. Also $K$ and $A$ are
normal subgroups of $H$ with $K\cap A=\{e\}$, giving $KA\cong K\times A$.
\end{proof}

We can now use the determinant as
our homomorphism with abelian image, enabling us to work in
complete generality.

\begin{thm} \label{det}
Suppose that $G$ is a linear group over any field $\F$
of any characteristic
and $A$ is an abelian subgroup which is
central in $G$. (Here neither $G$ nor $A$ is assumed to be
finitely generated.)
Let $\pi$ be the homomorphism from $G$ to
its abelianisation $G/G'$ and $\pi|_A$ the restriction
of $\pi$ to $A$. If $G$, or even $A$, contains no infinite order
unipotent element then
$\mbox{ker}(\pi|_A)$ is a torsion group.
\end{thm}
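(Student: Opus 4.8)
The plan is to exploit the determinant homomorphism on suitable $G$-invariant subspaces, using in an essential way that $A$ is central. First I would extend scalars to the algebraic closure, regarding $G\leq GL(d,\overline{\F})$. Since $A$ is a commuting set of matrices, the subalgebra $\mathcal{A}\subseteq M_d(\overline{\F})$ that it generates is a finite-dimensional commutative algebra, and $\overline{\F}^d$ decomposes as a direct sum of generalised weight spaces $\overline{\F}^d=\bigoplus_\alpha V_\alpha$, one for each maximal ideal of $\mathcal{A}$; on $V_\alpha$ every $a\in A$ acts as $\chi_\alpha(a)\cdot I$ plus a nilpotent part, where the character $\chi_\alpha:A\to\overline{\F}^*$ records the eigenvalue of $a$ on that space. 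The eigenvalues of a given $a\in A$ are then precisely the scalars $\chi_\alpha(a)$.

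The key step, and the reason centrality of $A$ is required, is that each $V_\alpha$ is $G$-invariant: any $g\in G$ commutes with every element of $A$, hence is $\mathcal{A}$-linear, and so preserves each weight space (which is defined intrinsically from the $\mathcal{A}$-action). Thus $G$ acts on $V_\alpha$, giving a representation $\rho_\alpha:G\to GL(V_\alpha)$, and I would then form the determinant $\det\circ\rho_\alpha:G\to\overline{\F}^*$. As this is a homomorphism into an abelian group it kills the commutator subgroup $G'$. On $A$ it is computed directly: since $a$ acts on the $n_\alpha$-dimensional space $V_\alpha$ with all eigenvalues equal to $\chi_\alpha(a)$, we get $\det\rho_\alpha(a)=\chi_\alpha(a)^{n_\alpha}$.

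Now take $a\in\mbox{ker}(\pi|_A)$, that is $a\in A\cap G'$ (as $\mbox{ker}(\pi)=G'$). Since $\det\rho_\alpha$ vanishes on $G'$ we obtain $\chi_\alpha(a)^{n_\alpha}=1$ for every $\alpha$, so every eigenvalue of $a$ is a root of unity. Hence for $M=\mathrm{lcm}_\alpha(n_\alpha)$ (or simply $M=d!$) the element $a^M$ has all eigenvalues equal to $1$, i.e. $a^M$ is unipotent. As $a^M$ lies in $A$, the hypothesis that $A$ contains no infinite order unipotent forces $a^M$, and therefore $a$, to have finite order. Since $a$ was arbitrary, $\mbox{ker}(\pi|_A)$ is a torsion group.

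I expect the main obstacle to be the first step: setting up the simultaneous generalised weight-space decomposition for the commuting family $A$ cleanly and verifying its $G$-invariance. Phrasing this through the finite-dimensional commutative algebra $\mathcal{A}$ and its decomposition by maximal ideals over the algebraically closed field $\overline{\F}$ is the most reliable way to produce both the characters $\chi_\alpha$ and the invariance of the $V_\alpha$ at once. Everything afterwards, namely that $\det\rho_\alpha$ is an abelian-valued homomorphism and that roots of unity force a unipotent power, is routine given the facts on unipotents recalled at the start of Section 2; note also that no finite generation of $G$ or $A$ is used anywhere, matching the generality claimed in the statement.
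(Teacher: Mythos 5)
Your proof is correct and follows essentially the same route as the paper: a simultaneous generalised eigenspace decomposition of $\overline{\F}^d$ for the commuting family $A$, its $G$-invariance via centrality, and blockwise determinants as homomorphisms to an abelian group, forcing any $a\in A\cap G'$ to have all eigenvalues roots of unity and hence to be virtually unipotent and so of finite order. The only cosmetic difference is that you produce the decomposition from the maximal ideals of the finite-dimensional commutative algebra generated by $A$, whereas the paper builds the same blocks by iteratively refining generalised eigenspaces one element of $A$ at a time; the subdeterminant homomorphisms and the endgame are identical.
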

\begin{proof}
We first replace our field by its algebraic closure, which we will also call
$\F$. Then it is true that any abelian subgroup of $GL(d,\F)$ is conjugate
in $GL(d,\F)$ to an upper triangular subgroup of $GL(d,\F)$, for instance
by induction on the dimension and Schur's Lemma.

For any $g\in G$ and $a\in A$ we have $ga=ag$. This means that
$g$ must map not just each eigenspace of $a$ to itself, but each
generalised eigenspace
\[E_{\lambda}(a)=\{v\in\F^d:(a-\lambda I)^nv=0\mbox{ for some } n\in\N\}
\mbox{ where }\lambda\in\F\]
and together these span, so that if $a$ has distinct eigenvalues
$\lambda^{(a)}_1,\ldots ,\lambda^{(a)}_{d_a}$ then 
$\bigoplus_{i=1}^{d_a} E_{\lambda^{(a)}_i}(a)$ is a
$G$-invariant direct sum of $\F^d$.

We now take a particular (but arbitrary) non identity element $a$ of $A$
and restrict $G$ to the first of these
generalised eigenspaces $E_{\lambda^{(a)}_1}(a)$, so that 
here $a$ only has the one eigenvalue $\lambda^{(a)}_1$.  
If this property also holds on $E_{\lambda^{(a)}_1}(a)$ for every
other $a'\in A$ then we proceed to $E_{\lambda^{(a)}_2}(a)$, 
$E_{\lambda^{(a)}_3}(a)$ and so on. Otherwise there is another $a'\in A$
such that we can split $E_{\lambda^{(a)}_1}(a)$ further into
pieces where $a'$ has only one eigenvalue on each piece.
Moreover this decomposition is also $G$-invariant because it can
be thought of as the direct sum of the generalised eigenspaces of $a'$
when $G$ is restricted to $E_{\lambda^{(a)}_1}(a)$. 

We then continue this process on all of the pieces and over all elements
of $A$ until it terminates (essentially we can view it as building a
rooted tree where every vertex has valency at most $d$ and of finite
diameter). We will now find that we have split $\F^d$ into a $G$-invariant
sum $V_1\oplus\ldots \oplus V_k$ of $k$ blocks, where any element
of $A$ has a single eigenvalue when restricted to any one of these 
blocks. 

Next we conjugate within each of these blocks so that the restriction
of $A$ to this block is upper triangular, using the comment at the
start of this proof. Under this basis so obtained
for $\F^d$, we have that any $a\in A$ will now be of the form
\[a=\sma{ccc}\boxed{T_1}& &0\\
&\ddots&\\
0& &\boxed{T_k}\fma\]
where each block $T_i$ is an upper triangular matrix with all diagonal entries
equal (as these are the eigenvalues of $a$ within this block).
More generally any $g\in G$ will be of the form
\[\sma{ccc}\boxed{M_1}& &0\\
&\ddots&\\
0& &\boxed{M_k}\fma
\]
for various matrices $M_1,\ldots ,M_k$ which are the same size as the
respective matrices $T_1,\ldots ,T_k$ because we know $g$ 
preserves this decomposition.

Consequently we have available as homomorphisms from $G$ to the multiplicative
abelian group $(\F^*,\times)$ not just the determinant itself but also
the ``subdeterminant'' functions
$\mbox{det}_1,\ldots ,\mbox{det}_k$, where for $g\in G$ the
function $\mbox{det}_j(g)$ is defined as
the determinant of the $j$th block of $g$ when
expressed with respect to our basis above, and
these are indeed homomorphisms as is
\[\theta:G\rightarrow (\F^*)^k\mbox{ given by }
\theta(g)=(\mbox{det}_1(g),\ldots ,\mbox{det}_k(g)).\]
As $\theta$ is a homomorphism from $G$ to an abelian
group, it factors through the homomorphism $\pi$ 
from $G$ to its abelianisation because this is the universal abelian
quotient of $G$. This means that $\mbox{ker}(\pi)$ is contained in 
$\mbox{ker}(\theta)$ and so we can replace $\pi$ with $\theta$ for the 
rest of the proof.

Thus suppose that there is some $a\in A$ which is in the kernel of 
$\theta$. We know that
\[a=\sma{ccc}\boxed{T_1}& &0\\
&\ddots&\\
0& &\boxed{T_k}\fma\]
for upper triangular matrices $T_i$
and as all diagonal entries within each $T_i$ are equal, say $\mu_i$
for $\mu_1,\ldots ,\mu_k\in \F^*$,
we conclude that $\mu_i^{d_i}=1$ where $d_i=\mbox{dim}(V_i)$.
In other words $a$ is virtually unipotent (namely some positive power of
$a$ is unipotent), implying under our hypotheses that $a$ has finite order. 
\end{proof}

We now immediately obtain the same
conclusion of Theorem \ref{know2} Part (iv) for finitely generated
linear groups in
characteristic zero, provided only that our abelian subgroup is unipotent
free.
\begin{co} \label{coiv}
If $H$ is any finitely generated linear group in characteristic zero 
and $A\cong\Z^n$ is central in $H$ and does not contain a non identity
unipotent element then
there exists a subgroup
of finite index in $H$ that contains $A$ as a direct factor.
\end{co}
\begin{proof}
Theorem \ref{det} gives us a homomorphism $\theta$ from 
$H$ to some abelian group $C$ whose restriction to $A$ has kernel
consisting only of torsion elements, thus $\theta$
 is injective on $A$ and Lemma \ref{lemqk} applies.
\end{proof}

\subsection{Applications to the mapping class groups}
Of course any finitely generated group $G$ which fails to
satisfy any of the four conditions in Theorem \ref{know2}
cannot act properly  and semisimply by isometries on a CAT(0) space.
An important example of this is the mapping
class group $Mod(\Sigma_g)$ where here
$\Sigma_g$ will be an orientable surface of finite topological type having
genus $g$ at least 3 (which might be closed or might have any
number of punctures or boundary components). 
In \cite{bri} Bridson shows that for all the
surfaces $\Sigma_g$ mentioned above, the mapping class group $Mod(\Sigma_g)$
is not a weak CAT(0) group,
a result first credited to \cite{kaplb}. This is done using the
following obstruction which is similar to Theorem \ref{know2} Part (iv).

\begin{prop} \label{bri} (\cite{bri} Proposition 4.2)\\
If $\Sigma$ is an orientable surface of finite type having genus at
least 3 (with any number of boundary components and punctures) and if $T$
is the Dehn twist about any simple closed curve in $\Sigma$ then the
abelianisation of the centraliser in $Mod(\Sigma)$ of $T$ is finite.
\end{prop}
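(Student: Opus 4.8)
The plan is to understand the structure of the centraliser $C(T)$ of a Dehn twist $T$ about a simple closed curve $\gamma$ in $\Sigma = \Sigma_g$ via the topology of the surface, and then show that its abelianisation is finite. The starting point is the classical description (going back to work on the mapping class group) that an element of $Mod(\Sigma)$ commutes with $T$ if and only if it fixes the isotopy class of $\gamma$ (up to orientation, and respecting the twisting direction). So the first step is to invoke the short exact sequence relating $C(T)$ to the mapping class group of the cut surface $\Sigma_\gamma$ obtained by cutting $\Sigma$ along $\gamma$. Concretely, there is an exact sequence
\[
1 \longrightarrow \langle T\rangle \longrightarrow C(T) \longrightarrow Mod(\Sigma_\gamma) \longrightarrow 1
\]
(possibly after passing to the index-two subgroup of orientation-preserving-on-$\gamma$ elements), where $\langle T\rangle \cong \Z$ is central.

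The second step is to abelianise. Applying the right-exact five-term (or simply the right-exactness of abelianisation) to the sequence above, the abelianisation of $C(T)$ is built from the abelianisation of $Mod(\Sigma_\gamma)$ together with the image of the central $\Z$ generated by $T$. The key input is then a homological fact about mapping class groups of surfaces: once the genus is large enough, $H_1(Mod(\Sigma_\gamma);\Z)$ is finite. This is exactly where the genus-at-least-3 hypothesis enters, and I would cite the standard computations of the first homology (abelianisation) of mapping class groups, which are known to be finite (indeed trivial in the closed case) for genus $\geq 3$; cutting along $\gamma$ drops the genus by at most one, so the cut surface still has genus $\geq 2$ in the separating case and we retain finiteness of $H_1$.

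The third and crucial step is to control the contribution of the central generator $T$ to the abelianisation, i.e. to show that the image of $T$ in $H_1(C(T);\Z)$ has finite order. The natural mechanism is the chain or lantern relation: in a subsurface of genus $\geq 1$ (or using a lantern relation in a four-holed sphere neighbourhood) one can express a power of the Dehn twist $T$, or $T$ times a product of other twists, as a product of commutators, so that $T$ becomes torsion in the abelianisation. This is the same mechanism by which one proves $H_1(Mod(\Sigma_g))$ is finite, and it is what forces the central $\langle T\rangle$ not to survive as a free $\Z$ summand.

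I expect the main obstacle to be the last step: pinning down precisely why $T$ (and not merely some auxiliary twist) dies in the abelianisation of $C(T)$, since $T$ is central and a priori central infinite-order elements can contribute a free $\Z$ to $H_1$. One must exhibit enough mapping classes supported in $\Sigma_\gamma$ (and hence commuting with $T$) to realise a chain/lantern relation whose abelianised form equates a nontrivial power of $T$ with a product of commutators and finite-order pieces. The genus $\geq 3$ condition guarantees the room to embed such a configuration disjoint from $\gamma$; verifying that the relevant relation actually lives inside $C(T)$ rather than only in the full $Mod(\Sigma)$ is the delicate point, and I would handle it by choosing the curves of the relation to lie entirely in the complement of a neighbourhood of $\gamma$.
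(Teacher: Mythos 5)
First, a point of comparison that matters for this exercise: the paper offers no proof of this statement at all. It is quoted verbatim from Bridson \cite{bri} (his Proposition 4.2) and used as a black box, so there is no internal argument to measure your proposal against; what follows assesses your sketch against what is actually needed, and against the published proof it is implicitly reconstructing. Your overall strategy --- identify the centraliser $C(T)$ with the stabiliser of the isotopy class of $\gamma$, pass to the mapping class group of the cut surface, and use lantern/chain relations to kill the central twist in homology --- is the right one, and you correctly isolate the delicate point: the lantern must be realised by curves disjoint from $\gamma$, all non-separating and lying in a single orbit under the stabiliser of $\gamma$, so that their twists share one class $t$ in $H_1(C(T))$ and the relation forces $4t=3t$.

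The genuine gap is in your second step. The claim that ``cutting along $\gamma$ drops the genus by at most one, so the cut surface still has genus $\geq 2$ in the separating case'' is false: a separating curve partitions the genus, so on a genus-$3$ surface one complementary piece can have genus $1$, or genus $0$ when punctures or boundary components are present (and the hypothesis explicitly allows any number of these). For such pieces the appeal to finiteness of $H_1$ of mapping class groups is unavailable: the mapping class group of a one-holed torus rel boundary is the braid group $B_3$ with $H_1\cong\Z$, and the mapping class group of a multiply punctured disc is a braid group with infinite abelianisation. So finiteness of $H_1(C(T))$ cannot be reduced wholesale to Powell-type vanishing for the cut surface. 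To close the argument one must (i) fix the boundary convention in the cutting sequence (boundary free, not fixed pointwise, so that the kernel really is generated by $T$ and boundary twists of the pieces die in the quotient), and (ii) kill the surviving generators coming from low-genus pieces --- half-twists about punctures and twists in the small side --- via relations such as $\sigma^2=T_{\partial}$ and the two-chain relation $(T_{c_1}T_{c_2})^6=T_{\partial}$, which feed everything back into the class of $T$ itself; that class is then killed by a lantern having $\gamma$ as one of its four boundary curves and its remaining six curves non-separating, which is where genus at least $3$ is genuinely used. Your sketch is recoverable along exactly these lines, but as written the reduction in step two does not go through.
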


As this is covered by Theorem \ref{det}, we have
\begin{co} Suppose that 
$\Sigma$ is an orientable surface of finite type having genus at
least 3 (with any number of boundary components and punctures) and
$\rho:Mod(\Sigma)\rightarrow GL(d,k)$ is any linear representation
in any dimension over any field. Then for every Dehn twist
$T\in Mod(\Sigma)$, the matrix $\rho(T)$ either has finite order or is
virtually unipotent.
\end{co}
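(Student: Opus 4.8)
The plan is to combine Proposition \ref{bri} with Theorem \ref{det} by taking the centraliser of the Dehn twist $T$ as our ambient group. First I would set $G = C_{Mod(\Sigma)}(T)$, the centraliser of $T$ in $Mod(\Sigma)$, and let $A = \langle \rho(T) \rangle$ be the cyclic subgroup of $GL(d,k)$ generated by the image of the twist. Since $T$ is central in its own centraliser $G$ by definition, the image $\rho(A)$ is central in $\rho(G)$; applying Theorem \ref{det} to the linear group $\rho(G) \leq GL(d,k)$ with this central abelian subgroup, we learn that the kernel of $\pi|_A$ is a torsion group, where $\pi$ is the abelianisation map on $\rho(G)$.

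The next step is to exploit Proposition \ref{bri}, which tells us that the abelianisation of $G = C_{Mod(\Sigma)}(T)$ is finite. Because $\rho$ is a homomorphism, the abelianisation of $\rho(G)$ is a quotient of the abelianisation of $G$, hence also finite. Thus the image of $A$ under $\pi$ lands in a finite group, so $\pi(\rho(T))$ has finite order; combined with the conclusion of Theorem \ref{det} that $\mathrm{ker}(\pi|_A)$ is torsion, this forces $\rho(T)$ itself to have a power landing in the situation analysed in the proof of Theorem \ref{det}. Concretely, I would trace through: some power $\rho(T)^N$ lies in $\mathrm{ker}(\pi)$, and the final paragraph of the proof of Theorem \ref{det} shows any such kernel element is virtually unipotent (a power is genuinely unipotent). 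Chasing the orders, either $\rho(T)$ has finite order outright, or it is virtually unipotent as claimed.

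The main subtlety I anticipate is bookkeeping about which abelian quotient is being used and making sure the finiteness transfers correctly. Theorem \ref{det} is stated with $\pi$ the abelianisation of the linear group $\rho(G)$, whereas Proposition \ref{bri} concerns the abelianisation of the mapping class group centraliser $G$. The clean way to reconcile these is to observe that since $\rho(G)$ is a quotient of $G$, finiteness of $G/G'$ immediately yields finiteness of $\rho(G)/\rho(G)'$, so the ``$C$'' supplied by Theorem \ref{det} (which factors through $\pi$) composed with the finite abelianisation gives a homomorphism from $\langle \rho(T)\rangle$ to a finite group whose kernel is torsion. I expect no genuine obstacle here, only the need to state carefully that ``virtually unipotent'' means a positive power is unipotent, matching the terminology already introduced when Theorem \ref{det} concluded $a$ was virtually unipotent.

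One final point to handle cleanly: the corollary makes no assumption that $\rho$ is faithful nor that $k$ has any particular characteristic, and indeed Theorem \ref{det} was proved over any field of any characteristic with the hypothesis applied to $A$ rather than all of $G$. So I would emphasise that we only ever invoke the absence of infinite order unipotents inside the cyclic group $A$ when it is relevant; in the general statement we do not assume this, and that is precisely why the dichotomy ``finite order or virtually unipotent'' appears rather than the stronger conclusion. The proof thus reduces entirely to the two quoted results with a short order-chasing argument.
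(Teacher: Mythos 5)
Your proposal is correct and follows essentially the same route as the paper: take $G=\rho(C_{Mod(\Sigma)}(T))$ and $A=\langle\rho(T)\rangle$ central in it, note that Proposition 3.4 forces the abelianisation of $G$ (a quotient of that of the centraliser) to be finite, and then run the dichotomy of Theorem 3.2 --- either $A$ contains an infinite order unipotent, whence $\rho(T)$ is virtually unipotent, or $\ker(\pi|_A)$ is torsion, whence $\rho(T)$ has finite order since $\pi(\rho(T))$ does. The only cosmetic difference is that you partially re-open the proof of Theorem 3.2 to see that elements of $A\cap\ker\pi$ are virtually unipotent, where the paper just invokes the stated dichotomy; both are fine.
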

\begin{proof} Set $A$ and $G$ to be the images under $\rho$ of
$\langle T\rangle$ and the centraliser in $Mod(\Sigma)$ of $T$
respectively. Thus $A$ will be abelian and central in $G$ and we
know by Proposition \ref{bri} that the abelianisation of the
centraliser in $Mod(\Sigma)$ of $T$ is finite, so the abelianisation
of $G$ is also finite. Thus on applying Theorem \ref{det} to $G$ and $A$,
either $A=\langle\rho(T)\rangle$ contains some infinite order unipotent
element or $\mbox{ker}(\pi|_A)$ is a torsion group.
As $\pi$ maps to the finite group $G/G'$, $\rho(T)$ has finite
order in the second case.
\end{proof}

This immediately gives us:
\begin{co} \label{yoh}
If $\Sigma$ is an orientable surface of finite type having genus at
least 3 (with any number of boundary components and punctures)
and $\rho:Mod(\Sigma)\rightarrow GL(d,k)$ is any linear representation
of the mapping class group of $\Sigma$ in any dimension $d$
where the field $k$ either has positive
characteristic or $k=\C$ and the image of $\rho$ lies in the
unitary group $U(d)$ then $\rho(T)$ has finite order for $T$ any
Dehn twist and thus $\rho$ cannot be faithful.
\end{co}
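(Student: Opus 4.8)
The plan is to invoke the previous corollary, which already tells us that for any Dehn twist $T$ and any linear representation $\rho$ over any field, the matrix $\rho(T)$ is either of finite order or virtually unipotent. The entire remaining content is to rule out the genuinely infinite order virtually unipotent case under each of the two extra hypotheses, and then to observe that a Dehn twist about an essential simple closed curve has infinite order in $Mod(\Sigma)$.

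For the positive characteristic case I would argue directly. Suppose $\rho(T)$ is virtually unipotent, so that $\rho(T)^m$ is unipotent for some $m\geq 1$. As recalled at the start of Section 2, over a field $k$ of characteristic $p>0$ every unipotent element of $GL(d,k)$ has finite (indeed $p$ power) order. Hence $\rho(T)^m$ has finite order, so $\rho(T)$ does too, and combining with the dichotomy above we conclude that $\rho(T)$ is always of finite order.

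For the unitary case I would use that $k=\C$ and $\rho(T)\in U(d)$ is a normal operator, hence diagonalisable over $\C$. If $\rho(T)$ is virtually unipotent then $\rho(T)^m$ is unipotent for some $m$; but $\rho(T)^m$ is also diagonalisable, being a power of a diagonalisable matrix, and a diagonalisable unipotent matrix equals $I$. Thus $\rho(T)^m=I$ and $\rho(T)$ has finite order. I expect this diagonalisability of unitary matrices to be the essential point distinguishing this case from the previous corollary, since over a general field a virtually unipotent matrix (for instance a nontrivial unipotent in characteristic zero) need not be diagonalisable and need not have finite order.

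Finally, in both cases we have shown that $\rho(T)$ has finite order for every Dehn twist $T$. Since the Dehn twist about any essential simple closed curve has infinite order in $Mod(\Sigma)$ yet is sent by $\rho$ to a finite order matrix, the homomorphism $\rho$ has nontrivial kernel and so cannot be faithful. The main obstacle is already dispatched by Theorem \ref{det} and the preceding corollary, so what remains is the routine linear algebra above, once the correct structural fact (finite order of unipotents in characteristic $p$, diagonalisability of unitaries) is supplied in each of the two cases.
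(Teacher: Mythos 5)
Your proposal is correct and matches the paper's intended argument: the paper states Corollary \ref{yoh} as an immediate consequence of the preceding corollary, and the details you supply (unipotents have $p$-power order in positive characteristic; a unitary matrix is diagonalisable so a virtually unipotent one has finite order; a Dehn twist about an essential curve has infinite order in $Mod(\Sigma)$) are exactly the routine verifications being left to the reader. No discrepancy with the paper's approach.
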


We note that there are ``quantum'' linear representations with
infinite image but where every Dehn twist has finite order.
However 
linearity of the mapping class group in genus $g\geq 3$
is a longstanding open question, although in genus 2 linearity over
$\C$ was established in \cite{bbud} and in \cite{kork}
by applying results on braid groups. 

\section{Undistorted cyclic and abelian subgroups}

Having shown that the non curvature properties in Theorem \ref{know2}
are satisfied by any finitely generated linear group in positive
characteristic, we have also seen that proposition (iv) holds for
finitely generated linear groups in characteristic zero if they contain
no non trivial unipotent matrices. We might also wonder about the
other three properties for this class of groups; indeed they all hold
too (see \cite{mel}). In this section we examine property (ii), which is
that all finitely generated abelian subgroups are undistorted. Our
proof of this in \cite{mel} involves taking a finite number of different
absolute values on the field, in a similar fashion to Proposition
\ref{vals}, and then using the operator norm of the matrix elements
with respect to each absolute value in order to show that there are no
finitely generated abelian subgroups which are distorted. 
In the special case of infinite cyclic
subgroups, this fact was already proved in \cite{lmr} Proposition 2.4
where an argument also using the operator norm was provided and moreover
this is a short proof because only one absolute value is required there.
Now if it were true for a finitely generated group $G$ that having all
infinite cyclic subgroups undistorted implies all finitely generated
abelian subgroups are undistorted then that short argument could also
be used to establish property (ii) in Theorem \ref{know2}. However
this does not hold in general, for instance some groups of the form
$G=\Z^2\rtimes\Z$ provide counterexamples.  
Consequently in this section we examine what further
conditions can be placed on a finitely generated group in order to
ensure that all of its finitely generated
abelian subgroups are undistorted. This is achieved by examining the
work of G.\,Connor (in \cite{consol} and other papers cited there)
on translation length.

We proceed as follows: if $G$ is any finitely generated group then
put the word length $l_S$ on $G$ (with respect to
some finite generating set $S$). Next let $\tau$ be the associated translation
length function from $G$ to $[0,\infty)$,
that is $\tau(g)=\mbox{lim}_{n\rightarrow\infty}l_S(g^n)/n$.
Now having $\tau(g)>0$ for all infinite order elements $g$ is equivalent
to saying that every cyclic subgroup of $G$ is undistorted. This suggests
the following definition:\\
\begin{defn} We say a finitely generated group $G$ has
{\bf uniformly undistorted cyclic subgroups} if
there exists $c>0$ with $\tau(g)\geq c$ for
all infinite order
$g\in G$.
\end{defn}
Note that, just as for the property of having undistorted cyclic
subgroups, having uniformly undistorted cyclic subgroups is
invariant under change of generating set $S$ (though the
constant $c$ varies), because both $l_S$ and
$\tau$ will be replaced by Lipschitz equivalent functions.

Our result is as follows.
\begin{thm} \label{trids}
Let $G$ be a finitely generated group with uniformly undistorted
cyclic subgroups.
Then any finitely generated abelian subgroup $A$ of $G$ is
undistorted in $G$.
\end{thm}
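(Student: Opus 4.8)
The plan is to convert word-length distortion into a statement about the translation length function $\tau$ restricted to $A$, and then to recognise $\tau$ there as the restriction to a lattice of a genuine norm. First I would reduce to the case $A\cong\Z^r$ is free abelian: a finitely generated abelian group contains a free abelian subgroup of finite index, finite-index subgroups are quasi-isometrically embedded hence undistorted, and undistortion is transitive, so it suffices to treat $A=\Z^r$. The torsion part and the constant $c$ play no role in this reduction, since finite-order elements have $\tau=0$ and are absorbed into bounded error.

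Next I would record the elementary properties of $\tau$ on the abelian subgroup $A$. As the generating set is symmetric we have $\tau(a^{-1})=\tau(a)$ and $\tau(a^n)=|n|\tau(a)$, and because elements of $A$ commute, $(ab)^n=a^nb^n$ gives $l_S((ab)^n)\le l_S(a^n)+l_S(b^n)$ and hence subadditivity $\tau(ab)\le\tau(a)+\tau(b)$. Thus $\tau|_A$ is symmetric, homogeneous over $\Z$ and subadditive, so by the standard stable-norm construction underlying Connor's translation-length results it is the restriction to the lattice $\Z^r$ of a unique seminorm $N$ on $A\otimes\R\cong\R^r$, with $N(a)=\tau(a)$ for every $a\in\Z^r$.

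The crux is to upgrade $N$ from a seminorm to a norm, and this is exactly where the \emph{uniform} hypothesis, rather than mere pointwise positivity $\tau>0$, is needed. Let $V_0=\ker N$, a linear subspace of $\R^r$. If $V_0$ met the lattice in a nonzero point $a$ we would get $\tau(a)=N(a)=0$, contradicting $\tau(a)\ge c$. If instead $V_0\neq 0$ but $V_0\cap\Z^r=\{0\}$, then the projection $q:\R^r\to\R^r/V_0$ is injective on $\Z^r$, so $q(\Z^r)$ is a free abelian group of rank $r$ inside a real vector space of dimension $r-\dim V_0<r$. A discrete subgroup of such a space has rank at most its dimension, so $q(\Z^r)$ cannot be discrete and therefore accumulates at $0$; choosing $a_k\in\Z^r\setminus\{0\}$ with $q(a_k)\to 0$ and using continuity of the induced norm on the quotient gives $\tau(a_k)=N(a_k)\to 0$, again contradicting the uniform bound. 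Hence $V_0=\{0\}$ and $N$ is a genuine norm. I expect this non-discreteness (Kronecker-type) step to be the main obstacle, as it is the only place uniformity is exploited and it is precisely what fails for the $\Z^2\rtimes\Z$ counterexamples, whose translation-length seminorm degenerates along an irrational direction approached by lattice points.

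Finally I would assemble the estimate. All norms on the finite-dimensional space $\R^r$ are equivalent, so $N(a)\ge c'\|a\|_1$ for some $c'>0$, where $\|\cdot\|_1$ is any fixed norm comparable to the intrinsic word length $l_A$ on $\Z^r$. Combining this with $\tau(a)\le l_S(a)$ and $N(a)=\tau(a)$ yields $l_A(a)\le C\|a\|_1\le (C/c')N(a)=(C/c')\tau(a)\le (C/c')l_S(a)$ for a suitable constant $C$, which is exactly the inequality expressing that $A$ is undistorted in $G$, the reverse inequality being automatic since the inclusion is Lipschitz.
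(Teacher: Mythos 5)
Your proposal is correct and follows essentially the same route as the paper: reduce to a free abelian subgroup of finite index, observe that the uniform hypothesis makes $\tau|_A$ a discrete $\Z$-norm on the lattice, extend it to a genuine norm on $\R^m$, and conclude by equivalence of norms on a finite-dimensional space. The only difference is that you supply an explicit (and correct) argument for the key upgrade from seminorm to norm -- via the non-discreteness of a rank-$r$ subgroup projected into a lower-dimensional quotient -- whereas the paper delegates exactly this step to the reference \cite{step}.
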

\begin{proof}
Any finitely generated abelian subgroup will have a finite index subgroup $A$
which is isomorphic to some free abelian group
$\Z^m$ and it is enough to show
that $A$ is undistorted in $G$. First pick out 
any free abelian basis $a_1,\ldots ,a_m$ for $A$.
Now by an $\R$-norm $||\cdot ||$ on $\R^m$ we mean the standard definition from
normed vector spaces, that is it satisfies the triangle inequality with
$||v||$ being zero if and only if $v$ is the zero vector, and also
$||\lambda v||=|\lambda|\cdot ||v||$ for $|\cdot |$ the usual modulus
on $\R$. We will also define a $\Z$-norm on $\Z^m$ to be a function
$f:\Z^m\rightarrow [0,\infty)$ having the same properties, except
the last becomes $f(na)=|n|\cdot||a||$ for all $n\in\Z$ and $a\in\Z^m$.
We also have $\R$- and $\Z$-seminorms where we remove the $||\cdot ||=0$
implies $\cdot=0$ condition.

On considering word length $l_S$ on $G$ with respect to some finite
generating set $S$ and the associated translation length $\tau$, we have
$0\leq\tau(g)\leq l_S(g)$ for any $g\in G$ by repeated use of
the triangle inequality and also 
$\tau(g^n)=|n|\tau(g)$. We further have $\tau(gh)\leq\tau(g)+\tau(h)$
for commuting elements $g,h$ (but not in general). Thus on
restricting $\tau$ to the abelian subgroup $A$ we see that $\tau$ is
 a $\Z$-seminorm on $A$, hence also a $\Z$-norm as
$\tau(a)>0$ for all $a\in A\setminus\{id\}$. Indeed $\tau$ is actually
a discrete $\Z$-norm, given that we have $c>0$ with
$\tau(a)\geq c$ for all $a\in A\setminus\{id\}$.

On regarding $A\cong\Z^m$ as embedded in $\R^m$ via the integer lattice
points, we can extend $\tau$ to $\Q^m$ by dividing through and to
$\R^m$ by taking limits, so that $\tau$ is also an $\R$-seminorm
on $\R^m$. However it is not too hard to see that $\tau$ is in fact
a genuine $\R$-norm, as explained carefully in \cite{step}, and we
denote this by $||\cdot ||_\tau$.

Now take $a=a_1^{n_1}\ldots a_m^{n_m}\in A$ which is also the lattice
point $(n_1,\ldots ,n_m)\in\R^m$. We have the $\ell_1$ norm 
$||\cdot ||_1$ on $\R^m$
but all norms on $\R^m$ are equivalent, so there is $k>0$ such that
\[l_S(a)\geq\tau(a)=||(n_1,\ldots ,n_m)||_\tau
\geq k||(n_1,\ldots ,n_m)||_1=k(|n_1|+\ldots +|n_m|)\]
so $A$ is undistorted in $G$.
\end{proof}

\subsection{$Out(F_n$) and related results}
We finish with an application of the above theorem to the
outer automorphism group $Out(F_n)$ of the rank $n$ free group, as well as
to some related groups. Now, at least for $n\geq 4$, 
$Out(F_n)$ is not linear over any field by \cite{fp} nor
is it a weak CAT(0) group by \cite{ger}. Thus 
we cannot apply earlier results directly to $Out(F_n)$ in order to
establish that every finitely generated abelian subgroup of $Out(F_n)$
is undistorted, which was recently proven in \cite{wrig}. 
But previously the paper \cite{alib}, which
ostensibly had shown that all cyclic subgroups of $Out(F_n)$ are undistorted,
actually gave us more:
\begin{thm} (\cite{alib} Theorem 1.1)\\
Every infinite order element $g$ of $Out(F_n)$ has positive translation
length $\tau(g)$.
Furthermore there exists a constant $c_n>0$ such that $\tau(g)\geq c_n$
for all $g\in Out(F_n)$ (using the generating set consisting of
permutations, inversions and Nielsen twists), so that $Out(F_n)$ has
uniformly undistorted cyclic subgroups.
\end{thm}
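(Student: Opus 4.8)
The plan is to bound the word length $l_S(\phi)$ below by a concrete measurement of how much $\phi$ stretches the free group, and then to treat separately the two ways in which an infinite order $\phi$ can stretch: exponentially and polynomially. Throughout I fix a free basis $x_1,\ldots,x_n$ of $F_n$ and write $\ell(w)$ for the cyclically reduced length of a conjugacy class $[w]$, which is the natural quantity on which $Out(F_n)$ acts.

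First I would record the elementary upper comparison that does all the work on one side. Each generator in $S$ (a permutation, an inversion $x_i\mapsto x_i^{-1}$, or a Nielsen twist $x_i\mapsto x_ix_j^{\pm1}$) alters $\ell$ by at most a bounded multiplicative factor $C=C(n)$. Writing $\phi$ as a product of $L=l_S(\phi)$ such generators therefore gives $\ell(\phi(w))\le C^{L}\ell(w)$ for every conjugacy class, whence $l_S(\phi)\ge \frac{1}{\log C}\log\big(\ell(\phi(w))/\ell(w)\big)$. Applying this to $\phi^k$, dividing by $k$ and letting $k\to\infty$ yields
\[
\tau(\phi)\;\ge\;\frac{1}{\log C}\,\limsup_{k\to\infty}\frac{1}{k}\log\frac{\ell(\phi^k(w))}{\ell(w)},
\]
so \emph{any} conjugacy class of positive exponential growth already forces $\tau(\phi)>0$.

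Next I would dispose of the exponentially growing elements and extract a uniform constant for them. If some conjugacy class grows exponentially under $\phi$, then relative train track theory (Bestvina-Handel) realises $\phi$ with an exponentially growing stratum whose growth rate is $\log\lambda$, where $\lambda>1$ is the Perron-Frobenius eigenvalue of a nonnegative integer transition matrix of size bounded in terms of $n$. The displayed inequality then gives $\tau(\phi)\ge \log\lambda/\log C$. Since a Perron-Frobenius eigenvalue of such a bounded-size integer matrix that exceeds $1$ is bounded away from $1$ by a constant $\lambda_0(n)>1$, I obtain a uniform bound $\tau(\phi)\ge c_n^{(1)}:=\log\lambda_0/\log C>0$ for every exponentially growing $\phi$.

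The main obstacle is the polynomially growing infinite order elements, for which the inequality above is worthless: the logarithm of a polynomial in $k$, divided by $k$, tends to $0$. Here I would invoke the structure theory of polynomially growing outer automorphisms (after passing to a bounded power they are unipotent in the sense of Bestvina-Feighn-Handel, built by transvection-type moves along an invariant filtration of $F_n$ by free factors) and aim to show directly that $l_S(\phi^k)$ grows at least linearly in $k$. A mere logarithmic length bound cannot produce this, so the crux is to track an \emph{integer-valued} coordinate -- a twisting number on the top nontrivial stratum, additive under iteration and modelled on the basic example $x_1\mapsto x_1x_2^{k}$ whose abelianised norm is already of order $k$ -- and to convert its linear growth into a linear lower bound on word length while controlling cancellation. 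Carrying this out uniformly is the genuinely hard step and yields a constant $c_n^{(2)}>0$. Finally, taking $c_n=\min(c_n^{(1)},c_n^{(2)})$, every infinite order $\phi$ lies in one of the two regimes and satisfies $\tau(\phi)\ge c_n$, which is exactly the asserted positivity together with uniform non-distortion of cyclic subgroups.
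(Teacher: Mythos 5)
This statement is quoted by the paper from \cite{alib} and is not proved in the paper itself, so the only benchmark is Alibegovic's argument; measured against the task of actually proving the theorem, your proposal has a genuine gap. The exponential case is handled correctly: the estimate $\ell(\phi(w))\le C^{\,l_S(\phi)}\ell(w)$ (with $C=2$ for Nielsen generators) together with the fact that expansion factors of exponentially growing strata are Perron--Frobenius eigenvalues of bounded-size nonnegative integer matrices, hence bounded away from $1$ in terms of $n$, does give a uniform positive lower bound on $\tau(\phi)$ for exponentially growing $\phi$. But for polynomially growing infinite order elements you explicitly defer ``the genuinely hard step,'' and the hint you give for how to carry it out does not work as stated. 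An integer coordinate that grows linearly in $k$ (such as an entry of the image of $\phi^k$ in $GL(n,\Z)$, or a twisting number read off from $\phi^k(w)$) only forces $l_S(\phi^k)\gtrsim \log k$, because a single generator of $Out(F_n)$ can \emph{multiply} such a coordinate by a bounded factor rather than change it by a bounded additive amount; this is exactly why nontrivial unipotent elements of $GL(n,\Z)$ are logarithmically distorted for $n\ge 3$, so the abelianisation $Out(F_n)\twoheadrightarrow GL(n,\Z)$ cannot detect the claimed non-distortion, and neither can the cyclically reduced length $\ell(\phi^k(w))$, which also grows only linearly.

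What is actually required in the polynomial case is a quantity that (a) changes by at most a uniform \emph{additive} constant under each generator in $S$, and (b) grows at least linearly along the powers of every infinite order UPG automorphism; constructing such a quantity (and controlling cancellation when conjugacy classes are pushed around) is the substance of Alibegovic's proof, not a routine verification. Until that object is exhibited, the constant $c_n^{(2)}$ and hence the uniform bound $c_n=\min(c_n^{(1)},c_n^{(2)})$ are not established, and indeed even the qualitative statement $\tau(\phi)>0$ is unproved for polynomially growing $\phi$. So the proposal proves roughly half the theorem and correctly identifies, but does not close, the half on which the application in Section 4 of the paper actually depends.
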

Thus combining that (short) paper with Theorem \ref{trids},
we immediately obtain a quick proof of \cite{wrig}:
\begin{co}
Every finitely generated abelian subgroup of $Out(F_n)$ is undistorted.
\end{co}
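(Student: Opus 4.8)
The plan is to combine the two ingredients that have just been assembled: Theorem \ref{trids} and the immediately preceding theorem from \cite{alib}. The logic is essentially an immediate citation-and-apply argument, so the body of the proof should be very short. First I would invoke the Theorem from \cite{alib} quoted above, which gives precisely that $Out(F_n)$ has uniformly undistorted cyclic subgroups: there is a constant $c_n>0$ with $\tau(g)\geq c_n$ for every infinite order $g\in Out(F_n)$, with respect to the standard generating set of permutations, inversions and Nielsen twists. Since (as noted in the excerpt) the property of having uniformly undistorted cyclic subgroups is independent of the choice of finite generating set, this hypothesis is exactly the one required by Theorem \ref{trids}.

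Next I would note that $Out(F_n)$ is finitely generated (indeed by the very generating set named above), so it satisfies the standing hypothesis of Theorem \ref{trids}. I would then simply apply Theorem \ref{trids} to $G=Out(F_n)$: the theorem asserts that any finitely generated abelian subgroup $A$ of $G$ is undistorted in $G$. This yields the stated conclusion directly, with no further computation needed.

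The only point that requires any care at all is that the abelian subgroups under consideration are finitely generated, so that Theorem \ref{trids} literally applies. This is automatic for $Out(F_n)$ since it is known to be of type $\mathrm{VF}$ (in particular finitely generated and indeed of finite virtual cohomological dimension), so all of its abelian subgroups are finitely generated; alternatively, the statement of the corollary may simply be read as a statement about the finitely generated abelian subgroups, matching the hypothesis of Theorem \ref{trids} verbatim. I do not anticipate any genuine obstacle here: the real work was done in establishing Theorem \ref{trids} and in the translation-length estimate of \cite{alib}, and this corollary is the clean formal consequence of putting the two together.
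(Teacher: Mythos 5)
Your proposal is correct and is exactly the paper's argument: the corollary is obtained immediately by feeding the uniform translation-length bound of \cite{alib} (which gives uniformly undistorted cyclic subgroups for $Out(F_n)$) into Theorem \ref{trids}. The side remark about finite generation is unnecessary since the corollary is stated only for finitely generated abelian subgroups, but it does no harm.
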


We end by pointing out that $Out(F_n)$ having
undistorted abelian subgroups can be used to
establish some further consequences.       
\begin{co} If $A$ is any abelian subgroup of the automorphism
group $Aut(F_n)$, or of any free by cyclic group $G=F_n\rtimes_\alpha\Z$
for $\alpha\in Aut(F_n)$ then $A$ is finitely generated and
undistorted in $Aut(F_n)$ or in $G$.
\end{co}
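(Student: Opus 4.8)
The plan is to derive both cases from a single reduction. Each ambient group sits in a short exact sequence $1\to F_n\to\Gamma\to Q\to 1$ with free kernel: for $\Gamma=Aut(F_n)$ the kernel is $Inn(F_n)\cong F_n$ and $Q=Out(F_n)$, while for $\Gamma=F_n\rtimes_\alpha\Z$ the kernel is $F_n$ and $Q=\Z$. In both cases $Q$ has uniformly undistorted cyclic subgroups: for $\Z$ this is immediate since $\tau_\Z(k)=|k|\geq 1$, and for $Out(F_n)$ it is exactly the quoted theorem of \cite{alib}. The target is to show that $\Gamma$ \emph{itself} has uniformly undistorted cyclic subgroups, whereupon Theorem \ref{trids} delivers that every finitely generated abelian subgroup of $\Gamma$ is undistorted.

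First I would dispose of finite generation. Given an abelian $A\leq\Gamma$, the intersection $A\cap F_n$ is an abelian subgroup of a free group, hence trivial or infinite cyclic, while the image $\bar A$ in $Q$ is abelian and finitely generated (subgroups of $\Z$ are cyclic, and abelian subgroups of $Out(F_n)$ are known to be finitely generated, as noted before the preceding corollary). An extension of a finitely generated group by a finitely generated group is finitely generated, so $A$ is finitely generated; since distortion is unaffected by passage to a finite index subgroup, I may assume $A\cong\Z^r$.

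For the undistortion I would pick generating sets making the quotient $\Gamma\to Q$ be $1$-Lipschitz, so that $\tau_\Gamma(g)\geq\tau_Q(\bar g)$ for every $g$. If $g$ has infinite order and so does $\bar g$, then $\tau_\Gamma(g)\geq\tau_Q(\bar g)\geq c_Q>0$ and such $g$ are handled. The remaining elements are those whose image in $Q$ has finite order; since finite subgroups of $Out(F_n)$ have order bounded by some $K_n$ (and the only finite order element of $\Z$ is trivial), a uniformly bounded power $g^N$ lies in $F_n$, and from $\tau_\Gamma(g)=\tau_\Gamma(g^N)/N$ it suffices to bound $\tau_\Gamma(u)$ below uniformly over all nontrivial $u\in F_n$.

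This last step is the main obstacle, and it is where the content genuinely lies. The normal copy of $F_n$ is typically \emph{exponentially distorted} in $\Gamma$ (for instance $t^kwt^{-k}=\alpha^k(w)$ in the mapping torus, while a primitive word of length $L$ is carried to $x_1$ by an automorphism of length $\sim\log L$), so one cannot simply compare with the intrinsic metric of $F_n$. What rescues the argument is that individual cyclic subgroups behave far better than the whole kernel: for cyclically reduced $u$ one has the upper bound $\tau_\Gamma(u)\leq\min_k|\alpha^k(u)|_{cr}$ in the free-by-cyclic case together with an $Aut(F_n)$-orbit analogue for $\iota_u$, and the real work is the matching linear lower bound $\tau_\Gamma(u^m)\gtrsim m$. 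For $Aut(F_n)$ I would secure this by using the action on Auter space in the manner of \cite{alib} to obtain a uniform positive lower bound on the translation length of \emph{every} infinite order element, inner automorphisms included. For $F_n\rtimes_\alpha\Z$ I would instead argue through the geometry of the mapping torus, controlling how far the stable letter can shorten a power $u^m$ (this is obstructed because no $\alpha^{-j}(u)$ is trivial, so pushing the $t$'s out of a short word for $u^m$ still exhibits it as an $m$-th power of a nontrivial element), giving $\tau_\Gamma(u)$ equal to its orbit minimum and hence at least $1$. Granting the uniform constant for all of $\Gamma$, Theorem \ref{trids} finishes the proof, and the assertions for $Aut(F_n)$ and for every $F_n\rtimes_\alpha\Z$ follow simultaneously.
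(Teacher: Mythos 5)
Your overall strategy is a genuinely different one from the paper's, but as written it has a real gap at exactly the point you flag as ``the main obstacle''. Your reduction (quotient to $Q$, handle elements with infinite order image via Alibegovic's uniform constant, reduce the rest to elements of the normal copy of $F_n$) is sound, but the proof then rests entirely on the claim that $\tau_\Gamma(u)$ is uniformly bounded below over all nontrivial $u$ in the normal free kernel --- for $Aut(F_n)$ this means a uniform lower bound on translation lengths of inner automorphisms, which is \emph{not} what \cite{alib} proves (that theorem concerns $Out(F_n)$), and for $F_n\rtimes_\alpha\Z$ your ``push the $t$'s out of a short word for $u^m$'' argument is not a proof: rewriting a geodesic word for $u^m$ so that the stable letters are collected changes lengths uncontrollably (exponentially, for exponentially growing $\alpha$), and exhibiting $u^m$ as an $m$-th power of a nontrivial element of $F_n$ says nothing about its $\Gamma$-word length. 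Since this uniform bound is the entire content of the statement under your approach, and you only sketch how you ``would secure'' it, the proof is incomplete.

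The paper avoids all of this with two elementary observations. First, if $S\leq H\leq G$ are finitely generated and $S$ is undistorted in $G$, then $S$ is undistorted in $H$. Second, by Magnus's embedding $Aut(F_n)\injects Out(F_{n+1})$ (automorphisms of $F_{n+1}$ fixing the last basis element), and $F_n\rtimes_\alpha\Z\injects Aut(F_n)$ as $\langle\alpha, Inn(F_n)\rangle$ whenever $\alpha$ has infinite order in $Out(F_n)$ (the finite order case reduces to the finite index subgroup $F_n\times\Z$). So every abelian subgroup in sight sits inside $Out(F_{n+1})$, where the preceding corollary already gives undistortion, and the first observation pulls it back down. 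Incidentally, the same embedding would also repair your argument at the source: since $\tau_H\geq\tau_G$ on $H$ for suitable generating sets, uniform undistortion of cyclic subgroups passes from $Out(F_{n+1})$ to any finitely generated subgroup, so $Aut(F_n)$ and $F_n\rtimes_\alpha\Z$ inherit it directly and Theorem \ref{trids} then applies to them; but you would need to invoke that embedding rather than reprove the translation length bounds by hand.
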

\begin{proof}
If $S,H,G$ are all finitely generated groups with $S\leq H\leq G$ and
$S$ is undistorted in $G$
then $S$ is undistorted in $H$ (else extend the generating set of $H$ to
one of $G$, whereupon the distortion persists). The converse also holds
if $H$ has finite index in $G$.
Now an observation dating
back to Magnus is that $Aut(F_n)$ embeds in $Out(F_{n+1})$ by
considering automorphisms of $F_{n+1}$ which fix the last element of the
basis.

Moreover for the free by cyclic group
$G=F_n\rtimes_\alpha\Z$, we have that if $\alpha$
has infinite order in $Out(F_n)$ then $G$ embeds in $Aut(F_n)$. This
can be seen on taking the copy $F_n$ of inner automorphisms in $Aut(F_n)$
and then $G$ is isomorphic to $\langle \alpha,F_n\rangle\leq Aut(F_n)$.
If however $\alpha$ has finite order then $G$ contains the finite index
subgroup $H=F_n\times\Z$ and this certainly has undistorted finitely
generated abelian subgroups.
\end{proof}

\end{document}